\newcommand{\R}{\mathbb{R}}
\newcommand{\E}{\mathcal{E}}
\newcommand{\T}{\mathcal{T}}
\newcommand{\F}{\mathcal{F}}
\newcommand{\K}{\mathcal{K}_{\text{vor}}}
\newcommand{\M}{\mathscr{M}}
\newcommand{\A}{\textbf{a}}
\newcommand{\bsi}{{\bf a}}
\newcommand{\mL}{\mathcal{K}_{\textrm{vor}}}
\newcommand{\ck}{{\kappa_0}}
\newcommand{\si}{a}
\newcommand{\mV}{\bm{V}}
\newcommand{\hold}{A}
\newcommand{\vp}{\varphi}
\newcommand{\Khold}{\mathcal{K}_{\hold}}
\newcommand{\C}{\mathcal{C}}
\newcommand{\trp}{Y}
\newcommand{\dop}{X}
\newcommand{\dbsi}{\delta\bsi}
\newcommand{\bta}{\theta}
\newcommand{\Om}{V}
\newcommand{\tz}{t_0}
\newcommand{\lsf}{\phi}
\newcommand{\mm}{\mathscr{M}}
\newcommand{\dsi}{\delta\si}
\newtheorem{theorem}{Theorem}[section]
\newtheorem{definition}{Definition}[section]
\newtheorem{assump}{Assumption}
\begin{document}

\title{Optimization of centroidal Voronoi tessellations\thanks{This work has been partially supported by the Brazilian agencies FAPESP (grants 2013/07375-0, 2022/05803-3, and 2023/08706-1) and CNPq (grant 302073/2022-1).}}

\author{
    Ernesto G. Birgin\thanks{Department of Computer Science, Institute of
    Mathematics and Statistics, University of S\~ao Paulo, Rua do
    Mat\~ao, 1010, Cidade Universit\'aria, 05508-090, S\~ao Paulo, SP,
    Brazil. e-mail: egbirgin@ime.usp.br}
\and 
    Juan S. C. Franco\thanks{Department of Applied Mathematics, Institute of
    Mathematics and Statistics, University of S\~ao Paulo, Rua do
    Mat\~ao, 1010, Cidade Universit\'aria, 05508-090, S\~ao Paulo, SP,
    Brazil. e-mail: jcastanof@ime.usp.br}
\and 
    Antoine Laurain\thanks{Faculty of Mathematics, University of Duisburg-Essen, Thea-Leymann-Str. 9, 45127, Essen, Germany, e-mail: antoine.laurain@uni-due.de}
}

\date{February 3, 2025}

\maketitle

\begin{abstract}
In this paper, we investigate the optimization of Centroidal Voronoi Tessellations (CVT) under geometric constraints. For this purpose, we minimize a linear combination of the standard CVT energy functional with terms involving geometric attributes such as area and perimeter. The derivative of the objective functional with respect to the position of the generators is computed using techniques of shape calculus and sensitivity analysis of minimization diagrams. Several numerical experiments are presented to explore the geometric constraints of cells with identical areas, cells without small edges, and density-based distributions of cells.\\

\noindent
\textbf{Keywords:} Centroidal Voronoi Tessellations, Voronoi diagrams, shape optimization, bound-constrained minimization, numerical experiments.\\

\noindent
\textbf{Mathematics Subject Classification:} 49Q10, 49J52, 49Q12
%49Q10 = Optimization of shapes other than minimal surfaces
%49J52 = Nonsmooth analysis
%49Q12 = Sensitivity analysis \\
\end{abstract}

\section{Introduction}

Centroidal Voronoi Tessellations (CVTs) are a special case of Voronoi tessellations in which each site coincides with the centroid of its Voronoi cell. 
They play an important role in various applications in science and engineering, including image processing, data compression, and numerical approximations of partial differential equations, particularly for mesh generation.
We refer to~\cite{du} for a comprehensive survey of applications. 
 CVT has been generalized to very broad settings, such as CVT of surfaces or line segments, distance metrics, and discrete point sets \cite{NMTMA-3-119}.

A standard approach to computing CVTs is the deterministic Lloyd's algorithm \cite{du,MR651807}, which is a fixed-point algorithm. 
The convergence of Lloyd's algorithm is slow, but several improvements have been introduced, such as the Lloyd-Newton method \cite{MR2209432} and variants that employ quasi-Newton methods, such as the limited-memory BFGS (LBFGS) method \cite{MR3315273,liu:inria-00547936}. CVTs can also be constructed using probabilistic methods such as MacQueen's algorithm \cite{macqueen}.

As shown in \cite{liu:inria-00547936}, a CVT is a critical point of the CVT energy function and can be constructed using derivative-based optimization methods since the energy function is $C^2$ within a convex domain. The derivative of the energy and the Lloyd map were computed in \cite{du} and used in subsequent papers such as \cite{bogosel2024numerical, liu:inria-00547936}. The use of these derivatives for gradient-based, Newton and quasi-Newton methods is also discussed in \cite{du}.

It was shown in \cite{birgin:01} that this type of sensitivity analysis for Voronoi diagrams can be recast in the much more general framework of minimization diagrams. Minimization diagrams are a broad class of diagrams whose cells are defined via the lower envelope of a set of graphs of functions. Many relevant diagrams, such as Voronoi diagrams or power diagrams, are special cases of minimization diagrams.  These theoretical results allow us to consider the optimization of any differentiable criterion depending on the geometric properties of the minimization diagram. This includes CVT as a special case, but also various other criteria such as perimeter, area, angles, and other geometric quantities. A useful feature of this general theory is the handling of fixed elements of the geometry, such as the boundary of the domain, which are usually not considered in the sensitivity analysis of Voronoi diagrams. In \cite{birgin:01}, the authors presented several numerical applications for the optimization of Voronoi diagrams with the aim of obtaining specific properties such as equal cell areas or edges of equal length. These properties are desirable for improving mesh quality using quality measures, see \cite{du,HRK}.

In this paper, we extend these results to the optimization of CVTs. We consider objective functions that are linear combinations of the CVT energy with additional terms that enforce geometric constraints. The goal is to demonstrate how one can control the geometric properties of CVTs. This can also be seen as minimizing the CVT energy with additional constraints in the form of penalizations. We focus on two types of geometric constraints. The first constraint is of the area type, forcing cells to have identical areas. The second type of constraint is a perimeter constraint, where the goal is to avoid cells with small edges.

In Section~\ref{sec:grad} we introduce the notation and tools necessary for the sensitivity analysis of Voronoi diagrams, and then apply these results to the computation of the gradient of the CVT and other relevant shape functions. Two different ways to compute the gradients are presented. Numerical experiments are given in Section~\ref{sec:num}. In the experiments, we show how to compute CVTs where all cells have the same area, cells without edges considered small, and cells whose area is governed by a given function. Conclusions and lines of future work are presented in the last section.

%%%%%%%%%%%%%%%%%%%%%%%%%%%
\section{The CVT energy function and its gradient}\label{sec:grad}
%%%%%%%%%%%%%%%%%%%%%%%%%%% 

In this section, we first recall the main results of \cite{birgin:01} that are necessary for the sensitivity analysis of Voronoi diagrams. We then introduce the CVT energy function and use these results to compute its gradient. Consider the set $\hold := \{ x\in\R^2 :\ \vp(x)<0\}$ with $\vp(x) := \min_{\ell\in\Khold} \vp_\ell(x)$ and $\vp_\ell\in\C^\infty(\R^2,\R)$ for all $\ell\in \Khold := \{\ck+1, \dots,\ck+\kappa_1\}$ for some given $\ck,\kappa_1$. For $\ell\in\Khold$, introduce the set $\partial_\ell \hold: = \{x\in\partial \hold :\ \vp_\ell(x)=0\}$; then we have $\partial \hold = \cup_{\ell\in\Khold} \partial_\ell \hold$. Denote by $\T_{\partial \hold}$ the finite set of corners of $\hold$. Let $\mL = \{1,\dots,\ck\}$ be a set of indices, $\bsi = \{\si_k\}_{k\in\mL}$ be a set of points in the plane, the so-called {\it sites}, and let $\mV(\bsi) := \{V_i(\bsi)\}_{i\in\mL}$ be the Voronoi diagram associated with $\bsi$, where the cells of the diagram are defined by
\[
V_i(\bsi):= \left\{x \in \hold \text{ such that } \|x - \si_i\| \leq  \|x - \si_j\| \text{ for all } j \in\mL\setminus\{i\} \right\}.
\]
We will also need the following notation. The Euclidean norm is denoted by $\|\cdot\|$. For $x,y\in \R^n$, $x\cdot y = x^\top y\in \R$; $x\otimes y=xy^{\top}\in \R^{n\times n}$. We use $y^{\perp}:= Ry$, for $y\in \R^2$, where $R$ is a rotation matrix of angle $\pi/2$ with respect to a counterclockwise orientation. The transpose of a matrix $M$ is indicated by $M^\top$. The gradient with respect to $x \in \R^2$ of a function $\psi : \R^2 \to \R$ is denoted $\nabla_x \psi$ and is a column vector. The Jacobian matrix is denoted $D_x\psi$. The gradient with respect to \textnormal{\A} of a function $G:\R^{q\kappa_0} \rightarrow \R$ is $\nabla G$. The Jacobian matrix with respect to $\textnormal{\A}$ of a function $G:\R^{q\kappa_0} \rightarrow \R^n $ is denoted by $DG$. For a sufficiently smooth set $S \subset \R^2$, $\overline{S}$ denotes its closure, $|S|$ its perimeter if $S$ is one-dimensional or its area if $S$ is two-dimensional. 

%%%%%%%%%%%%%%%%%%%
\subsection{Sensitivity analysis of Voronoi diagrams}\label{sec:2.1}
%%%%%%%%%%%%%%%%%%%

Let $\mV(\bsi+t\delta\A) := \{V_k(\bsi+t\delta\A)\}_{k\in\mL}$ be a perturbed Voronoi diagram. In what follows, we describe the tools and notation needed for the sensitivity analysis of $\mV(\bsi+t\delta\A)$ with respect to~$t$.

\begin{definition}[interior vertices and edges]
For $\{ i,j,k \} \subset \K$, we define the set of \textit{ inner vertices} $Y_{ijk}(t)\coloneq \overline{V_i(\bsi+t\delta\bsi)}\cap\overline{V_j(\bsi+t\delta\bsi)}\cap \overline{V_k(\bsi+t\delta\bsi)}$, that is, points in $A$ at the intersection of three cells, and write $Y_{ijk}\coloneq Y_{ijk}(0)$.  
For $k\in \mL\setminus\{i\}$, $E_{ik}(\bsi+t\dbsi):= \overline{\Om_i(\bsi+t\dbsi)}\cap\overline{\Om_k(\bsi+t\dbsi)}$ denotes an {\it interior edge} of $\mV(\bsi+t\dbsi)$.
\end{definition}

\begin{definition}[boundary vertices and edges]
For $\{ i,j \} \subset \K$ and $\ell\in \mathcal{K}_A$, we define the set of \textit{boundary vertices}  $X_{ij\ell}(t)\coloneq \overline{V_i(\bsi+t\delta\bsi)}\cap\overline{V_j(\bsi+t\delta\bsi)}\cap \partial_{\ell}A$, i.e., points on $\partial A$ at the intersection of two cells, and write $X_{ij\ell}\coloneq X_{ij\ell}(0)$.
For $k\in \mL\setminus\{i\}$ and $\ell\in\Khold$, $E_{ik}(\bsi+t\dbsi):= \overline{\Om_i(\bsi+t\dbsi)}\cap\partial_\ell \hold$ denotes a {\it boundary edge} of $\mV(\bsi+t\dbsi)$.
\end{definition}

We now recall the results of \cite{birgin:01}, which will allow us to perform the sensitivity analysis of Voronoi diagrams. In \cite{birgin:01}, a theoretical framework for the analysis of the sensitivity of minimization diagrams was developed, and the particular case of Voronoi diagrams was discussed in \cite[\S4]{birgin:01}. 
A key aspect of this theory is establishing a set of geometric assumptions that avoid degenerate cases and under which the sensitivity analysis can be performed. 
Specifically, these assumptions ensure that the interior vertices $\trp_{ijk}$ of the Voronoi diagram belong to at most three cells. Additionally, they eliminate trivial cases where two cells with different indices are identical.
These assumptions were formulated in the more general context of minimization diagrams, so here we provide a simpler formulation in the case of Voronoi diagrams, which summarizes the discussion in \cite[\S4]{birgin:01}.

\begin{assump}\label{assump1:MDbd}
Suppose that:
\begin{itemize}
\item (Non-degeneracy of interfaces)
There holds $\|\nabla_x \vp_\ell(x)\|>0$ for all $x\in \partial_\ell \hold$ and for all $\ell\in\Khold$,  and $\|\si_i -\si_j\|>0$ for all $\{i,j\}\subset\mL$.
\item (Non-degeneracy of vertices) For all  $\{i,j,k\}\subset \mL$ such that $\trp_{ijk}\neq\emptyset$ we have
$(\si_j-\si_i)^\perp\cdot (\si_k-\si_i) \neq 0$ and $\trp_{ijk}\cap \overline{V_m(\bsi)}=\emptyset$, for all $m\in\mL\setminus\{i,j,k\}$.
In addition, for all $\{i,j\}\subset \mL$ and $\ell\in\Khold$ and all $v\in  \dop_{ij\ell}$ we have
$(\si_j-\si_i)^\perp\cdot \nabla\vp_\ell(v) \neq 0$, $v\cap  \overline{V_m(\bsi)}=\emptyset$, for all $m\in\mL\setminus\{i,j\}$, and $v\cap \mathcal{T}_{\partial A}=\emptyset$, where $\mathcal{T}_{\partial A}$ is the finite set of corners of $A$.
\end{itemize}
\end{assump}

Under these geometric assumptions, the set $\trp_{ijk}$ contains at most one point, but $\dop_{ij\ell}$ may contain multiple points. The key to the sensitivity analysis of the perturbed Voronoi diagram $\mV(\bsi+t\delta\A)$ is the computation of the derivatives of the vertices $Y_{ijk}(t)$ and $X_{ijk}(t)$. This is essentially an application of the implicit function theorem under Assumption~\ref{assump1:MDbd}.

\begin{theorem} \label{theo:1}
Suppose Assumption~\ref{assump1:MDbd} holds and $|Y_{ijk}|=1$ for some $\{ i,j,k \} \subset \K$. Then, denoting $v=Y_{ijk}$, there exists $\tau_1>0$ and a unique smooth function $z_v:[0,\tau_1]\to \R^2$ satisfying $z_v(0)=v$, $z_v(t)=Y_{ijk}(t)$ for all $t\in [0,\tau_1]$ and
\begin{equation}\label{eq:29}
z'_v(0)= M_v(j,k,i)\delta a_i + M_v(k,i,j)\delta a_j + M_v(i,j,k)\delta a_k,
\end{equation}
where
\[
M_v(i,j,k) \coloneq  \frac{(a_i-a_j)^\perp \otimes (v-a_k)^\top}{Q(i,j,k)}
\]
and 
\begin{equation*}
Q(i,j,k) \coloneq \det 
\begin{pmatrix}
(a_j-a_i)^{\top} \\
(a_k-a_i)^{\top}
\end{pmatrix}.
\end{equation*}
\end{theorem}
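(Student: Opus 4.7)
The plan is to set up the equidistance equations characterizing the interior vertex and apply the implicit function theorem. Since $\trp_{ijk}(t)$ lies in the closure of the cells generated by $\si_i+t\dsi_i$, $\si_j+t\dsi_j$, $\si_k+t\dsi_k$, it is equidistant from these three points. Subtracting pairs of squared-distance equalities eliminates the $\|v\|^2$ term and produces a linear-in-$v$ system $F(v,t)=0$, namely
\begin{equation*}
F(v,t):=2\begin{pmatrix} v\cdot(\si_j+t\dsi_j-\si_i-t\dsi_i)\\ v\cdot(\si_k+t\dsi_k-\si_i-t\dsi_i)\end{pmatrix}-\begin{pmatrix}\|\si_j+t\dsi_j\|^2-\|\si_i+t\dsi_i\|^2\\ \|\si_k+t\dsi_k\|^2-\|\si_i+t\dsi_i\|^2\end{pmatrix}.
\end{equation*}

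I would then verify the hypotheses of the implicit function theorem at $(v,0)$. The Jacobian $D_v F(v,0)$ has rows $2(\si_j-\si_i)^\top$ and $2(\si_k-\si_i)^\top$, and its determinant equals $4(\si_j-\si_i)^\perp\cdot(\si_k-\si_i)=4Q(i,j,k)$, which is nonzero thanks to Assumption~\ref{assump1:MDbd}. Smoothness of $F$ in both arguments and $F(v,0)=0$ then yield $\tau_1>0$ and a unique smooth $z_v:[0,\tau_1]\to\R^2$ with $z_v(0)=v$ and $F(z_v(t),t)=0$. The perturbed vertex $\trp_{ijk}(t)$ satisfies $F(\cdot,t)=0$ as well; since Assumption~\ref{assump1:MDbd} persists for the perturbed configuration when $t$ is small, $\trp_{ijk}(t)$ remains a single point, and local uniqueness forces $z_v(t)=\trp_{ijk}(t)$.

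Differentiating $F(z_v(t),t)=0$ at $t=0$ gives $z_v'(0)=-(D_v F(v,0))^{-1}\,\partial_t F(v,0)$. A direct calculation yields
\begin{equation*}
\partial_t F(v,0)=2\begin{pmatrix}(v-\si_j)\cdot\dsi_j-(v-\si_i)\cdot\dsi_i\\ (v-\si_k)\cdot\dsi_k-(v-\si_i)\cdot\dsi_i\end{pmatrix},
\end{equation*}
and the cofactor formula applied to a $2\times 2$ matrix whose rows are $x^\top,y^\top$ delivers $(D_v F(v,0))^{-1}=\frac{1}{2Q(i,j,k)}\begin{pmatrix}-(\si_k-\si_i)^\perp & (\si_j-\si_i)^\perp\end{pmatrix}$ (the two perpendicular vectors being regarded as the columns of a $2\times 2$ matrix).

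Finally, I would substitute and collect the coefficients of $\dsi_i,\dsi_j,\dsi_k$ as outer products of the form $(\si_\ast-\si_\ast)^\perp\otimes(v-\si_\ast)^\top/Q(i,j,k)$. Matching with the symmetric expression~\eqref{eq:29} rests on the cyclic identity $Q(i,j,k)=Q(j,k,i)=Q(k,i,j)$, which is immediate since $Q(i,j,k)$ equals twice the signed area of the triangle $\si_i\si_j\si_k$. The $\dsi_j$ term produces $M_v(k,i,j)\dsi_j$ at once; the $\dsi_k$ term yields $M_v(i,j,k)\dsi_k$ after absorbing a sign via $(\si_i-\si_j)^\perp=-(\si_j-\si_i)^\perp$; and the two partial contributions to $\dsi_i$ combine through $(\si_j-\si_i)^\perp-(\si_k-\si_i)^\perp=(\si_j-\si_k)^\perp$ into $M_v(j,k,i)\dsi_i$. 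The main obstacle is purely bookkeeping, namely the careful tracking of signs and cyclic orientation; the analytical heart of the proof is the non-vanishing of $Q(i,j,k)$ guaranteed by Assumption~\ref{assump1:MDbd}.
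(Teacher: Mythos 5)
Your argument is correct, and it fills in what the paper itself leaves implicit: the paper does not prove Theorem~\ref{theo:1} but cites \cite{birgin:01}, where the result is obtained for general minimization diagrams by applying the implicit function theorem to the system of interface equations; the surrounding text in Section~\ref{sec:2.1} confirms that the intended mechanism is exactly the IFT under Assumption~\ref{assump1:MDbd}. Your route is the Voronoi specialization of that argument, and the specialization buys something: subtracting the squared-distance equalities makes $F(\cdot,t)$ affine in $v$, so you get an explicit inverse of $D_vF(v,0)$, a closed-form derivative, and in fact \emph{global} (not merely local) uniqueness of the equidistant point for each small $t$, which is slightly stronger than what the IFT alone gives and is what you really need to conclude $Y_{ijk}(t)=\{z_v(t)\}$. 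The computations check out: $\det D_vF(v,0)=4\,(a_j-a_i)^\perp\cdot(a_k-a_i)=4\,Q(i,j,k)\neq 0$ by Assumption~\ref{assump1:MDbd}, $\partial_tF(v,0)$ is as you state, and collecting coefficients of $\delta a_i,\delta a_j,\delta a_k$ together with the cyclic invariance $Q(i,j,k)=Q(j,k,i)=Q(k,i,j)$ reproduces \eqref{eq:29} exactly. The one step where your wording is looser than it should be is the identification $z_v(t)=Y_{ijk}(t)$: solving the equidistance system is necessary but not sufficient for being a vertex of the perturbed diagram, so you must also argue that $z_v(t)$ stays strictly closer to the three perturbed sites than to every other site $a_m+t\delta a_m$ (this is where the hypothesis $Y_{ijk}\cap\overline{V_m(\bsi)}=\emptyset$ for $m\notin\{i,j,k\}$ is used) and stays in the interior of $A$ (Assumption~\ref{assump1:MDbd} excludes an interior vertex lying on $\partial A$); both are strict, hence open, conditions at $t=0$ and persist by continuity. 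Your phrase ``Assumption~\ref{assump1:MDbd} persists for small $t$'' is the right idea, but naming these two conditions is what makes the identification airtight; the price of your approach, compared with invoking \cite{birgin:01}, is only that it does not cover the general minimization diagrams treated there.
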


\begin{proof}
See \cite[Theorem 7]{birgin:01}.
\end{proof}

\begin{theorem} \label{theo:2}
Suppose Assumption~\ref{assump1:MDbd} holds and let $\{ i,j \} \subset \K, \, \ell\in \mathcal{K}_A$. Then $X_{ij\ell}$ is finite, $X_{ij\ell}\in \partial A \setminus \mathcal{T}_{\partial A}$
, and there exists $\tau_1>0$ such that for all $v\in X_{ij\ell}$ there exists a unique smooth function $z_v:[0,\tau_1]\to \R^2$ satisfying $z(0)=v$, $\varphi_{\ell}(z_v(t))=0$ for all $t\in[0,\tau_1]$, and 
\[
X_{ij\ell}(t) = \bigcup_{v\in X_{ij\ell}}\{ z_v(t) \} \text{ for all } t\in [0,\tau_1].
\]
In addition we have
\begin{equation}\label{eq:32}
z'_v(0)=\M^{\ell}_v(j,i)\delta a_i + \M^{\ell}_v(i,j)\delta a_j
\end{equation}
with
\[
\M^{\ell}_v(j,i) \coloneq  \frac{-\nabla_x\varphi_{\ell}(v)^{\perp}\otimes (v-a_i)^{\top}}{\det\begin{pmatrix}
(a_j-a_i)^{\top} \\
\nabla_x \varphi_{\ell}(v)^{\top}\end{pmatrix}}.
\]
\end{theorem}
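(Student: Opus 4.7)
The plan is to characterize each element of $X_{ij\ell}(t)$ as a solution of the nonlinear system formed by the boundary equation and the bisector condition, and then to apply the implicit function theorem in direct analogy with Theorem~\ref{theo:1}. Concretely, a point $v \in X_{ij\ell}(t)$ satisfies $\varphi_\ell(v) = 0$ and is equidistant from the perturbed sites $a_i + t\delta a_i$ and $a_j + t\delta a_j$. Squaring the distance identity reduces the latter to an affine condition in $v$, so that $X_{ij\ell}(t)$ coincides with the zero set of the smooth map
\[
F(x,t) \coloneq \begin{pmatrix} \varphi_\ell(x) \\ \|x - a_i - t\delta a_i\|^2 - \|x - a_j - t\delta a_j\|^2 \end{pmatrix}.
\]

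First, I would use the non-degeneracy hypotheses of Assumption~\ref{assump1:MDbd} to obtain the structural claims: the assumption $v \cap \mathcal{T}_{\partial A} = \emptyset$ directly gives $X_{ij\ell} \subset \partial A \setminus \mathcal{T}_{\partial A}$, while finiteness of $X_{ij\ell}$ follows from the transversality condition $(a_j - a_i)^\perp \cdot \nabla_x \varphi_\ell(v) \neq 0$, which forbids the affine bisector of $a_i$ and $a_j$ from being tangent to the smooth boundary piece $\partial_\ell A$. At each $(v, 0)$ with $v \in X_{ij\ell}$, I would then verify the IFT hypothesis by computing
\[
D_x F(v, 0) = \begin{pmatrix} \nabla_x \varphi_\ell(v)^\top \\ 2(a_j - a_i)^\top \end{pmatrix},
\]
whose determinant is a nonzero multiple of $(a_j - a_i)^\perp \cdot \nabla_x \varphi_\ell(v)$, again by the same assumption. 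The IFT thus yields, on some $[0, \tau_1^v]$, a smooth branch $z_v$ with $z_v(0) = v$ and $F(z_v(t), t) = 0$, in particular $\varphi_\ell(z_v(t)) = 0$. Taking $\tau_1$ to be the minimum of $\tau_1^v$ over the finite set $X_{ij\ell}$ and combining the local uniqueness in the IFT with a continuity/compactness argument (no spurious boundary vertex can appear for small $t$, since it would have to accumulate at a point of $X_{ij\ell}$), I obtain $X_{ij\ell}(t) = \bigcup_{v \in X_{ij\ell}} \{z_v(t)\}$ on $[0, \tau_1]$.

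Finally, I would differentiate $F(z_v(t), t) = 0$ at $t = 0$ to obtain $D_x F(v, 0)\, z'_v(0) = -\partial_t F(v, 0)$, where a short computation gives $-\partial_t F(v, 0) = \bigl(0,\ 2(v - a_i) \cdot \delta a_i - 2(v - a_j) \cdot \delta a_j\bigr)^\top$. Since the right-hand side has zero first component, only the second column of $D_x F(v, 0)^{-1}$ contributes; using the adjugate formula together with the identity $(u_1, u_2)^\top \mapsto (-u_2, u_1)^\top = u^\perp$, this column equals $\nabla_x \varphi_\ell(v)^\perp$ divided by $\det D_x F(v,0)$. Distributing the two terms of $-\partial_t F(v, 0)$ then produces precisely $\M^\ell_v(j,i)\, \delta a_i + \M^\ell_v(i,j)\, \delta a_j$, after matching the row ordering in the denominator determinants appearing in the definition of $\M^\ell_v$. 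The main obstacle is purely notational: ensuring that the minus sign in the numerator of $\M^\ell_v$, the sign flip between $\det\bigl(\begin{smallmatrix} \nabla_x\varphi_\ell(v)^\top \\ (a_j - a_i)^\top \end{smallmatrix}\bigr)$ and the determinant appearing in the denominator of $\M^\ell_v$, and the convention $y^\perp = Ry$ with $R$ a counterclockwise $\pi/2$ rotation all combine to reproduce~\eqref{eq:32} exactly as stated.
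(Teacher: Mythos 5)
The paper offers no internal proof here---it simply defers to \cite[Theorem~8]{birgin:01}---and your route, applying the implicit function theorem to the pair $F(x,t)=\bigl(\varphi_\ell(x),\,\|x-a_i-t\delta a_i\|^2-\|x-a_j-t\delta a_j\|^2\bigr)$, is exactly the expected specialization of that result to Voronoi diagrams. The derivative computation you outline does close correctly: the second column of $D_xF(v,0)^{-1}$ is $\nabla_x\varphi_\ell(v)^{\perp}/\det D_xF(v,0)$ with the paper's convention $y^\perp=Ry$, one has $\det D_xF(v,0)=-2\det\bigl(\begin{smallmatrix}(a_j-a_i)^{\top}\\ \nabla_x\varphi_\ell(v)^{\top}\end{smallmatrix}\bigr)$, and the swap $i\leftrightarrow j$ in $\M^{\ell}_v(i,j)$ flips the sign of its denominator; distributing $2(v-a_i)\cdot\delta a_i-2(v-a_j)\cdot\delta a_j$ over these then reproduces \eqref{eq:32} with the stated minus sign, so the ``notational obstacle'' you flag is indeed only that.

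The one genuine gap is your claim that $X_{ij\ell}(t)$ \emph{coincides} with the zero set of $F(\cdot,t)$: only the inclusion $X_{ij\ell}(t)\subset F(\cdot,t)^{-1}(0)$ holds, since the zero set also contains bisector points that are occluded by a third cell and points of the level curve $\{\varphi_\ell=0\}$ that do not lie on $\partial A$ at all. Consequently the identity $X_{ij\ell}(t)=\bigcup_{v}\{z_v(t)\}$ needs both directions. You sketch the reverse one (compactness plus IFT local uniqueness rules out spurious vertices), but the forward one---that $z_v(t)$ really belongs to $\overline{V_i(\bsi+t\dbsi)}\cap\overline{V_j(\bsi+t\dbsi)}\cap\partial_\ell A$---is not automatic from $F(z_v(t),t)=0$, and it is precisely where the remaining items of Assumption~\ref{assump1:MDbd} enter: $v\cap\overline{V_m(\bsi)}=\emptyset$ for $m\notin\{i,j\}$ gives the strict inequality $\|v-a_i\|<\|v-a_m\|$, and $v\cap\mathcal{T}_{\partial A}=\emptyset$ gives $\varphi_m(v)>0$ for $m\neq\ell$; both persist for small $t$ by continuity, so $z_v(t)$ stays on $\partial_\ell A$ (not merely on $\{\varphi_\ell=0\}$) and is not cut off by another cell. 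Similarly, your finiteness argument needs, besides the isolation coming from $(a_j-a_i)^{\perp}\cdot\nabla_x\varphi_\ell(v)\neq 0$, the compactness of $\overline{V_i(\bsi)}\cap\overline{V_j(\bsi)}\cap\partial_\ell A$ (available here since $A$ is bounded). With these additions your proposal becomes a complete proof along the same lines as the cited result.
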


\begin{proof}
See \cite[Theorem 8]{birgin:01}.
\end{proof}

Under Assumption~\ref{assump1:MDbd}, the motion of each Voronoi cell $\Om_i(\bsi+t\dbsi)$, $i\in\mL$, can be parameterized by a bi-Lipschitz mapping $T(\cdot, t)$, such that its derivative $\bta := \partial_t T(\cdot,0)$ can be described explicitly as a function of the sites $\bsi$. The explicit expression of $\bta$ at the vertices is a consequence of Theorems~\ref{theo:1} and ~\ref{theo:2}. This parameterization is described in the following theorem, which is a particular case of  \cite[Theorem~5]{birgin:01}.

\begin{theorem} \label{thm01:walls}
Let $i\in\mL$ and suppose Assumption~\ref{assump1:MDbd} holds. Then there exist $\tz>0$ and a mapping $T:\overline{\Om_i(\bsi)} \times [0,\tz] \to \R^2$ satisfying $T(\Om_i(\bsi),t) = \Om_i(\bsi+t\dbsi)$, $T(E_{ik}(\bsi),t)=E_{ik}(\bsi+t\dbsi)$ for all $k\in\mL\setminus\{i\}$, $T(E_{i\ell}(\bsi),t)=E_{i\ell}(\bsi+t\dbsi)$ for all $\ell\in\Khold$,  $T(\partial \Om_i(\bsi),t) = \partial \Om_i(\bsi+t\dbsi)$ and $T(\cdot, t): \overline{\Om_i(\bsi)} \to \overline{\Om_i(\bsi+t\dbsi)}$ is  bi-Lipschitz   for all $t\in [0,\tz]$. In addition we have
\begin{equation} \label{der_normal:walls} 
\begin{array}{rcl}
\bta(x)\cdot\nu(x) &=& \frac{ \nabla_{\si} \lsf(x,\si_k)\cdot\delta\si_k - \nabla_{\si} \lsf(x,\si_i)\cdot\delta\si_i}{\|\nabla_x\lsf(x,\si_k) - \nabla_x\lsf(x,\si_i)\|}\text{ for all } x\in E_{ik}(\bsi) \text{ and all }  k\in\mL\setminus\{i\}, \\ [2mm]
\bta(x)\cdot\nu(x) &=& 0\text{ for all } x\in E_{i\ell}(\bsi)  \text{ and for all } \ell\in\Khold,\\[2mm]
\theta(v)\cdot\tau(v) &=& (M_v(j,k,i) \dsi_i + M_v(k,i,j) \dsi_j + M_v(i,j,k) \dsi_k)\cdot \tau(v)\\
&& \text{ for all } v\in\trp_{ijk},  \{i,j,k\}\subset \mL,\\[2mm]
\theta(v)\cdot\tau(v) &=& ( \mm_v^\ell(j,i) \dsi_i + \mm_v^\ell(i,j) \dsi_j)\cdot \tau(v)\text{ for all } v\in\dop_{ij\ell}, \{i,j\}\subset \mL, \ell\in\Khold.
\end{array}
\end{equation}
where $\bta := \partial_t T(\cdot,0)$, $\nu$ is the outward unit normal vector to $\Om_i(\bsi)$, $\lsf(x,\si):= \|x-\si\|^2$, and $\tau$ is a tangent vector to $\partial V_i(\bsi)$. 
\end{theorem}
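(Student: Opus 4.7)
The strategy is to invoke \cite[Theorem~5]{birgin:01}, which produces such a bi-Lipschitz parameterization $T$ and the formula for $\bta := \partial_t T(\cdot,0)$ in the general minimization-diagram setting. A Voronoi diagram is precisely the minimization diagram associated with the family $\lsf(x,\si_k):=\|x-\si_k\|^2$, $k\in\mL$, together with the boundary-defining functions $\vp_\ell$, $\ell\in\Khold$, and Assumption~\ref{assump1:MDbd} is the specialization to this case of the nondegeneracy hypotheses required by the general theorem. Thus the existence part of the statement and the abstract shape-derivative formula follow verbatim; what remains is to rewrite the derivative in the Voronoi-specific form \eqref{der_normal:walls}.

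For the interior-edge identity, I would start from the defining property $\lsf(x,\si_i)=\lsf(x,\si_k)$ of $E_{ik}(\bsi)$. Since $T(E_{ik}(\bsi),t)=E_{ik}(\bsi+t\dbsi)$ for $t\in[0,\tz]$, the perturbed identity
\[
\lsf(T(x,t),\si_i+t\dsi_i)=\lsf(T(x,t),\si_k+t\dsi_k)
\]
holds for all such $t$. Differentiating at $t=0$ yields
\[
\bigl(\nabla_x\lsf(x,\si_i)-\nabla_x\lsf(x,\si_k)\bigr)\cdot\bta(x) = \nabla_\si\lsf(x,\si_k)\cdot\dsi_k - \nabla_\si\lsf(x,\si_i)\cdot\dsi_i.
\]
The outward unit normal $\nu(x)$ on $E_{ik}(\bsi)$ is parallel to $\nabla_x\lsf(x,\si_i)-\nabla_x\lsf(x,\si_k)$, so dividing by its norm, and using that the norm is invariant under a sign change of its argument, gives the first formula. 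For a boundary edge $E_{i\ell}(\bsi)\subset\partial_\ell\hold$, invariance of the boundary under $T$ means $\vp_\ell(T(x,t))=0$; differentiating gives $\nabla_x\vp_\ell(x)\cdot\bta(x)=0$, and since $\nabla_x\vp_\ell(x)$ is parallel to $\nu(x)$ by Assumption~\ref{assump1:MDbd}, we conclude $\bta(x)\cdot\nu(x)=0$.

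For the vertex formulas, I would invoke Theorems~\ref{theo:1} and~\ref{theo:2}: a vertex $v\in\trp_{ijk}$, respectively $v\in\dop_{ij\ell}$, is an isolated intersection point whose perturbation is the unique smooth trajectory $z_v(t)$ produced by those theorems. By continuity and the incidence properties of $T$, the identity $T(v,t)=z_v(t)$ must hold, so $\bta(v)=z_v'(0)$ is the full two-dimensional vector given by \eqref{eq:29} or \eqref{eq:32}; projecting on any tangent vector $\tau(v)$ to $\partial\Om_i(\bsi)$ at $v$ yields the last two equations (the complementary normal component is already fixed by the adjacent edge formulas, so no information is lost).

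The main obstacle is really outsourced to \cite[Theorem~5]{birgin:01}: constructing the bi-Lipschitz map $T$ in the interior of each cell from the prescribed boundary motion---in a way that is consistent with the motion of adjacent cells and with the fixed boundary $\partial\hold$---is the technical heart of the sensitivity theory. Once that is in hand, the specialization presented above reduces to an elementary chain-rule computation, and the proof is complete by appealing to Theorems~\ref{theo:1} and~\ref{theo:2} for the vertex velocities.
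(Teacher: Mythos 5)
Your proposal matches the paper's own treatment: the result is obtained exactly as a particular case of \cite[Theorem~5]{birgin:01} applied to the Voronoi setting with $\lsf(x,\si)=\|x-\si\|^2$, with the vertex formulas coming from Theorems~\ref{theo:1} and~\ref{theo:2}. Your additional chain-rule computations on interior and boundary edges are correct and simply make explicit the specialization the paper leaves to the citation.
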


Theorems~\ref{theo:1}, \ref{theo:2}, and~\ref{thm01:walls} permit the computation of various relevant geometric quantities related to Voronoi diagrams. The derivative of area and edge integrals is of particular interest and has been performed in \cite[Sections~3 and 4]{birgin:01}; here we give a brief summary of these results. 
Starting with the case of an area integral, consider 
\begin{equation}\label{perturbedG1}
G_1(\A+t\dbsi)\coloneq \int_{V_i(\A+t\dbsi)}f(x)\,dx,
\end{equation}
where $f\in C^1(\overline{A},\R)$. Then, using Theorem~\ref{thm01:walls}, we have $V_i(\A+t\dbsi) = T(V_i(\A),t)$ and we can apply a change of variable to transform the integral in \eqref{perturbedG1} to an integral on $V_i(\A)$.
Next we can differentiate the obtained expression with respect to $t$, use the divergence theorem to transform the integral on $V_i(\A)$ to an integral on $\partial V_i(\A)$, and use the first equality in \eqref{der_normal:walls} to obtain
\begin{align}\label{eq:cellint}
\begin{split}
\nabla G_1(\A)\cdot \delta \A & = \sum_{E\in \E^{\text{int}}_i}\frac{\delta a_i}{\|a_i - a_{k(i,E)}\|}\cdot \int_E f(x)(x-a_i)\,dx\\
& - \sum_{E\in \E^{\text{int}}_i} \frac{\delta a_{k(i,E)}}{\|a_i - a_{k(i,E)}\|}\cdot \int_E f(x)(x-a_{k(i,E)})\,dx. 
\end{split}
\end{align}

Next, consider the edge integral function defined by
\begin{equation}\label{eq:perturbedG2}
G_2(\A+t\dbsi)\coloneq \int_{E(\A+t\dbsi)}f(x)\,dx,
\end{equation}
where $f\in C^1(\overline{A},\R)$. Here, the edge $E(\A)$ can either be an interior edge given by $E(\A)=\overline{V_i(\A)}\cap\overline{V_k(\A)},\,\{ i,k \} \subset \K$, or a boundary edge given by $E(\A)=\overline{V_i(\A)}\cap \partial_{\ell}A, \, \ell \in \mathcal{K}_A$. In a similar way as for the area integral, using Theorem~\ref{thm01:walls} we obtain the following expression for the gradient:
\begin{equation}\label{eq:35}
\nabla G_2(\A) \cdot \delta \A = \F(i,w_E)\cdot \tau (w_E) - \F(i,v_E)\cdot \tau (v_E),
\end{equation}
where $\tau$ is here the tangent vector to $\partial V_i(\textnormal{\A})$ with respect to a counterclockwise orientation and
\begin{equation} \label{eq:36}
\F(i,v) = 
\begin{cases}
M_v(j,k,i)\delta a_i + M_v(k,i,j)\delta a_j + M_v(i,j,k)\delta a_k, & \text{if } v\in Y_{ijk}, \\
\M^{\ell}_v(j,i)\delta a_i + \M^{\ell}_v(i,j)\delta a_j, & \text{if } v \in X_{ij\ell},\\
0, & \text{if } v\in \T_{\partial A}.
\end{cases} 
\end{equation}
Note that in \eqref{eq:36}, the indices $j,k$ in $Y_{ijk}$ and the index $j$ in $X_{ij\ell}$ actually depend on the index $i$ and on the vertex $v$. These indices may be uniquely determined by choosing a counterclockwise orientation of the cells around the vertex $v$.

%%%%%%%%%%%%%%%%%%%%
\subsection{CVT energy function}
%%%%%%%%%%%%%%%%%%%%

Let $\rho: A\to \R$, $\rho >0$, be a given density function. We work with the CVT energy function \cite{bogosel2024numerical,du,liu:inria-00547936} defined by 
\begin{equation}\label{eq:mfun2}
G(\A)\coloneq \frac{1}{\kappa_0}\sum^{\kappa_0}_{i=1} G_i(\A) \text{ with } G_i(\A) = \displaystyle\int_{V_i(\A)}\rho(x) \|x-a_i\|^2\,dx,
\end{equation}
recalling that $\bsi = \{a_i\}_{i=1}^{\ck}$ are the sites generating the cells $V_i(\A)$. In \cite{du,liu:inria-00547936} it is shown that the critical points of the function $G(\A)$ generates a CVT. The calculation of the gradient of $G(\A)$ is standard and has been performed, for example, in \cite{du,liu:inria-00547936}. Here, in Sections~\ref{subsec:2.3} and~\ref{subsec:2.4} we present the calculation of the gradient of $G(\A)$ using our framework, which will also allow us to supplement $G(\A)$ with other geometric terms in the numerical experiments. We present two ways to calculate the gradient of $G(\A)$. 
First, we compute $\nabla G(\A)$ directly  using \eqref{eq:cellint}, and second, we provide first an explicit formula for $G_i(\A)$ and $G(\A)$ as a function of the vertices and $a_i$, and compute the gradient of that formula.
We then discuss the differences between these two formulas for applications.

For numerical purposes, \eqref{eq:mfun2} needs to be evaluated. In the general case, quadrature rules need to be employed. 
In some particular cases, an explicit expression of $G(\A)$ can be obtained. Let $T(a_i,v,w)$ be the triangle with vertices $a_i,v,w$, where $w$ is the neighbor vertex of $v$ in counterclockwise direction, and define $\mathcal{V}_i$ as the set of vertices of $V_i(\A)$. Then we have the partition $V_i(\A)=\displaystyle\bigcup_{v\in \mathcal{V}_i}T(a_i,v,w)$. Thus
\begin{equation*}
G_i(\A) = \int_{V_i(\A)}\rho(x)\|x-a_i\|^2\,dx = \sum_{v\in \mathcal{V}_i}\int_{T(a_i,v,w)}\rho(x)\|x-a_i\|^2\,dx. 
\end{equation*}
Now, we consider the transformation $\phi: \R^2\to\R^2$ defined as follows,
\begin{equation*}
\phi(\xi,\lambda) = a_i + \xi(v-a_i) + \lambda(w-a_i) \text{ with } \xi,\lambda\geq 0 \text{ and } \xi+\lambda=1.
\end{equation*}
Using a change of variables, we have
\begin{equation*}
\int_{T(a_i,v,w)}f(x)\,dx = \int^1_{\xi=0}\int^{1-\xi}_{\lambda=0}f(\phi(\xi,\lambda))|J(v,w,\si_i) |\,d\lambda\,d\xi,
\end{equation*}
with the Jacobian matrix $J(v,w,\si_i) =D\phi=(v-a_i | w-a_i)\in \R^{2\times2}$ and
\begin{equation} \label{eq:detphi}
\begin{array}{rcl} 
|J(v,w,\si_i)| 
&=& 
\begin{vmatrix}
v_{1} - a_{i,1} & w_{1} - a_{i,1} \\
v_{2} - a_{i,2} & w_{2} - a_{i,2} 
\end{vmatrix} \\ [4mm]
&=& (v_{1} - a_{i,1})(w_{2} - a_{i,2}) - (v_{2} - a_{i,2})(w_{1} - a_{i,1}) \\ [2mm]
&=& -(v-a_i)\cdot(w-a_i)^\perp. 
\end{array}
\end{equation}
For $f(x) = \rho(x)\|x-a_i\|^2$, we get 
\begin{align*}
& \int^1_{0}\int^{1-\xi}_{0}\rho(\phi(\xi,\lambda))\|\phi(\xi,\lambda)-a_i\|^2|J(v,w,\si_i) |\,d\lambda d\xi\\
& =  |J(v,w,\si_i) |\int^1_{0}\int^{1-\xi}_{0}\rho(\phi(\xi,\lambda))\|\xi(v-a_i) + \lambda(w-a_i)\|^2\,d\lambda d\xi \\
& = |J(v,w,\si_i) |\int^1_{0}\int^{1-\xi}_{0}\rho(\phi(\xi,\lambda))(\xi^2\|v-a_i\|^2+\lambda^2\|w-a_i\|^2+2\lambda\xi(v-a_i)\cdot(w-a_i)) \,d\lambda d\xi.
\end{align*}
To simplify this expression further, we need to choose a specific class of functions $\rho$ such as polynomials. Let us consider the particular case $\rho \equiv 1$, which results in
\begin{equation*}
\int^1_{0}\int^{1-\xi}_{0}\|\phi(\xi,\lambda)-a_i\|^2|J(v,w,\si_i) |\,d\lambda d\xi = \frac{|J(v,w,\si_i) |}{12}(\|v-a_i\|^2+(v-a_i)\cdot(w-a_i)+\|w-a_i\|^2).
\end{equation*}
Finally,
\begin{equation*}
G_i(\A) = \sum_{v\in\mathcal{V}_i} \frac{|J(v,w,\si_i) |}{12}(\|v-a_i\|^2+(v-a_i)\cdot(w-a_i)+\|w-a_i\|^2),
\end{equation*}
and this yields the following explicit formula of $G(\A)$ in the case $\rho\equiv 1$:
\begin{equation}
G(\A) = \frac{1}{\kappa_0}\sum^{\kappa_0}_{i=1}\left(\sum_{v\in\mathcal{V}_i} \frac{|J(v,w,\si_i) |}{12}(\|v-a_i\|^2+(v-a_i)\cdot(w-a_i)+\|w-a_i\|^2)\right).
\label{eq:explicitG}
\end{equation}

\subsection{Computing the gradient of the integral form of \texorpdfstring{$G(\A)$}{G(a)}}\label{subsec:2.3}

Here, we compute the value of the gradient of the integral form \eqref{eq:mfun2} of $G(\A)$ using \eqref{eq:cellint}. 
First, we have
\begin{equation*}
\nabla G(\A) = \nabla \left(\frac{1}{\kappa_0} \sum^{\kappa_0}_{i=1}G_i(\A)  \right) = \frac{1}{\kappa_0}\sum^{\kappa_0}_{i=1} \nabla G_i(\A).
\end{equation*}
Let $\E^{\text{int}}_i$ be the set of the interior edges of the cell $V_i(\A)$ and $k(i,E)$ be the index such that $E = \overline{V_i(\A)} \cap \overline{V_{k(i,E)}(\A)}$. Applying \eqref{eq:cellint}, we get 
\begin{align*}
\nabla G_i(\A)\cdot \delta \A &= \nabla\left( \int_{V_i(\A)}\rho(x)\|x-a_i\|^2\,dx \right) \cdot \delta \A \\
& = \sum_{E\in \E^{\text{int}}_i}\frac{\delta a_i}{\|a_i - a_{k(i,E)}\|} \cdot \underbrace{\int_E \rho(x) \|x-a_i\|^2(x-a_i)\,dx}_{=: I^1_E} \\ 
& \quad - \sum_{E\in \E^{\text{int}}_i}\frac{\delta a_{k(i,E)}}{\|a_i - a_{k(i,E)}\|} \cdot \underbrace{\int_E \rho(x)\|x-a_i\|^2(x-a_{k(i,E)})\,dx}_{=:  I^2_E}\\
& \quad -2 \delta a_i \cdot \int_{V_i(\A)}\rho(x) (x-a_i) \,dx.
\end{align*}
Note that the last term is not present in \eqref{eq:cellint} but appears as the integrand $\rho(x)\|x-a_i\|^2$ depends on $a_i$.

Introducing $c_i\coloneq\displaystyle \frac{\int_{V_i(\A)}\rho(x)x\,dx}{\int_{V_i(\A)}\rho(x)\,dx}$ the centroid of the cell $V_i(\A)$, we obtain
\begin{align*}
\nabla G_i(\A)\cdot \delta \A 
& = 2 \delta a_i \cdot (a_i - c_i)\int_{V_i(\A)}\rho(x)\,dx + \sum_{E\in \E^{\text{int}}_i} \frac{\delta a_i\cdot I^1_E - \delta a_{k(i,E)} \cdot I^2_E}{\|a_i - a_{k(i,E)}\|}.
\end{align*}
This yields
\[
\nabla G(\A)\cdot\delta \A = \frac{1}{\kappa_0}\sum^{\kappa_0}_{i=1}\sum_{E\in \E^{\text{int}}_i}\frac{\delta a_i\cdot I^1_E - \delta a_{k(i,E)} \cdot I^2_E}{\|a_i - a_{k(i,E)}\|} +2 \delta a_i \cdot (a_i - c_i)\int_{V_i(\A)}\rho(x)\,dx.
\]
Now we rearrange the term
\begin{align*}
\sum^{\kappa_0}_{i=1}\sum_{E\in \E^{\text{int}}_i}\frac{\delta a_{k(i,E)}\cdot I^2_E}{\|a_i - a_{k(i,E)}\|} 
\end{align*}
by summing over all fixed indices $k$ such that $k = k(i,E)$ for some index $i$ and edge $E$.
Then $a_i$ becomes $a_{i(k,E)}$ which yields
\begin{align*}
\sum^{\kappa_0}_{i=1}\sum_{E\in \E^{\text{int}}_i}\frac{\delta a_{k(i,E)}\cdot I^2_E}{\|a_i - a_{k(i,E)}\|} 
& = \sum^{\kappa_0}_{k=1}\sum_{E\in \E^{\text{int}}_k}\frac{\delta a_k\cdot \displaystyle\int_E\rho(x)\|x - a_{i(k,E)}\|^2(x-a_k)\,dx }{\|a_{i(k,E)} - a_{k}\|}.
\end{align*}
Using the property $\|x-a_{i(k,E)}\|=\|x-a_{k}\|$ on $E$ and changing the notation for indices, we end up with 
\begin{align*}
\sum^{\kappa_0}_{i=1}\sum_{E\in \E^{\text{int}}_i}\frac{\delta a_{k(i,E)}\cdot I^2_E}{\|a_i - a_{k(i,E)}\|}  
& =  \sum^{\kappa_0}_{i=1}\sum_{E\in \E^{\text{int}}_i}\frac{\delta a_i\cdot I^1_E}{\|a_i - a_{k(i,E)}\|}.
\end{align*}

Thus, the terms depending on $I^1_E$ and $I^2_E$ cancel out in $\nabla G(\A)\cdot \delta \A$ and
\begin{equation}\label{grad1}
\nabla G(\A)\cdot \delta \A = \frac{1}{\kappa_0}\sum^{\kappa_0}_{i=1} 2 \delta a_i \cdot (a_i - c_i)\int_{V_i(\A)}\rho(x)\,dx, 
\end{equation} 
which is the standard formula in the literature, see for instance \cite{bogosel2024numerical, du, 10.1007/BFb0008901}. 
The fact that $I^1_E$ and $I^2_E$ cancel out is also written in \cite[p.278]{10.1007/BFb0008901} for instance.

%%%%%%%%
\subsection{Explicit gradient computation in the constant density case} \label{subsec:2.4}
%%%%%

To obtain the expression~\eqref{grad1} of $\nabla G(\A)$, we have used \eqref{eq:cellint}, which is based on Theorem~\ref{thm01:walls}, using the shape calculus techniques of \cite{birgin:01}. In the special case where $\rho$ is constant, $\nabla G(\A)$ can be obtained in a simpler way, without using Theorem~\ref{thm01:walls}, by directly differentiating \eqref{eq:explicitG}. The purpose of this section is to perform this calculation and discuss the formula obtained. Note that one could also perform an explicit calculation of $\nabla G(\A)$ for specific  classes of functions $\rho$, such as polynomials.

Let $\E_i$ be the set of edges of $V_i(\A)$. In \eqref{eq:explicitG} the sum is over $\mathcal{V}_i$, the set of vertices of $V_i(\A)$. We transform it into a sum over $E\in \E_i$ in order to use the results of Section~\ref{sec:2.1}. We also write $v_E$, $w_E$ instead of $v$,$w$, recalling that $w$ is the neighbor vertex of $v$ in counterclockwise orientation. Differentiating \eqref{eq:explicitG}, we thus obtain 
\begin{equation}
\nabla G(\A)\cdot \delta \A=\frac{1}{\kappa_0}\sum^{\kappa_0}_{i=1}\left(\frac{1}{12}\sum_{E\in \E_i}\sigma\nabla \gamma + \gamma\nabla \sigma\right)\cdot \delta \A,
\label{eq:gradexpl}
\end{equation}
where $\gamma \coloneq |J(v_E,w_E,\si_i)|$ and $\sigma\coloneq\|v_E-a_i\|^2+(v_E-a_i)\cdot(w_E-a_i)+\|w_E-a_i\|^2$. First, we compute 
\begin{align*}
\nabla \gamma \cdot \delta \A = \nabla (|J(v_E,w_E,\si_i)|)\cdot\delta \A = \text{sign}(J(v_E,w_E,\si_i))\nabla (J(v_E,w_E,\si_i)) \cdot \delta \A.
\end{align*}
Using \eqref{eq:detphi}, we have 
\begin{align*}
\nabla(J(v_E,w_E,\si_i)) \cdot \delta \A 
& = -\nabla ((v_E-a_i)\cdot(w_E-a_i)^\perp)\cdot\delta\A \\
& = - ((D(v_E-a_i))^\top(w_E-a_i)^\perp + (D(w_E-a_i)^\perp)^\top(v_E-a_i))\cdot \delta \A \\
& = -D(v_E-a_i)\delta \A \cdot (w_E-a_i)^\perp - D(w_E-a_i)^\perp \delta \A \cdot (v_E-a_i) \\
& = -D(v_E-a_i)\delta \A \cdot (w_E-a_i)^\perp + D(w_E-a_i)\delta \A \cdot (v_E-a_i)^\perp \\
& = -Dv_{E}\delta \A \cdot w^\perp_E + Dv_{E}\delta \A \cdot a^\perp_i + Da_i\delta\A \cdot w^\perp_E\\
& \quad + Dw_{E}\delta \A \cdot v_E^\perp - Dw_E\delta\A \cdot a^\perp_i- Da_{i}\delta \A \cdot v_E^\perp.
\end{align*} 
We have $Da_i\delta \A=\delta a_i$ and in view of \eqref{eq:29}, \eqref{eq:32}, \eqref{eq:36} we have that $Dv_{E}\delta \A=\F(i,v_E)$ and $Dw_{E}\delta \A=\F(i,w_E)$, thus
\begin{equation}
\nabla \gamma\cdot\delta \A = \text{sign}(J(v_E,w_E,\si_i))(\F(i,v_E)\cdot(-w^\perp_E + a^\perp_i) + \F(i,w_E)\cdot(v^\perp_E-a^\perp_i)-\delta a_i(-w^\perp_E+v^\perp_E)).
\label{eq:grad(a)}
\end{equation}
Now, we compute
\begin{align*}
\nabla \sigma \cdot \delta \A = [\nabla(\|v_E-a_i\|^2)+\nabla((v_E-a_i)\cdot(w_E-a_i))+\nabla(\|w_E-a_i\|^2)] \cdot \delta \A.
\end{align*}
We have
\begin{align*}
\nabla((v_E-a_i)\cdot(w_E-a_i))\cdot\delta\A&=[(w_E-a_i)\nabla(v_E-a_i)+(v_E-a_i)\nabla(w_E-a_i)]\cdot \delta \A \\
& =(w_E-a_i)\cdot[(Dv_E-Da_i)\cdot \delta \A] + (v_E-a_i)\cdot[(Dw_E-Da_i)\cdot\delta \A] \\
& =(w_E-a_i)\cdot(\F(i,v_E)-\delta a_i)+(v_E-a_i)\cdot (\F(i,w_E)-\delta a_i) \\
& =\F(i,v_E)\cdot(w_E-a_i)+\F(i,w_E)\cdot(v_E-a_i)-\delta a_i(w_E+v_E-2a_i),
\end{align*}
and in a similar way
\begin{align*}
\nabla(\|v_E-a_i\|^2)\cdot \delta \A
& = 2\F(i,v_E)\cdot (v_E-a_i) - 2\delta a_i\cdot (v_E-a_i)\\ 
\nabla(\|w_E-a_i\|^2) \cdot \delta \A
& = 2\F(i,w_E)\cdot (w_E-a_i) - 2\delta a_i\cdot (w_E-a_i).
\end{align*}
Combining these results, we get
\begin{align}
\nabla\sigma \cdot \delta \A &= 2\F(i,v_E)\cdot (v_E-a_i) - 2\delta a_i\cdot (v_E-a_i) + \F(i,v_E)\cdot(w_E-a_i)+\F(i,w_E)\cdot(v_E-a_i)\nonumber \\&\quad-\delta a_i(w_E+v_E-2a_i)
+2\F(i,w_E)\cdot (w_E-a_i) - 2\delta a_i\cdot (w_E-a_i) \nonumber \\
&=\F(i,v_E)\cdot(2v_E+w_E-3a_i)+\F(i,w_E)\cdot(v_E+2w_E-3a_i) \label{eq:grad(b)} 	\\ &\quad-3\delta a_i\cdot(v_E+w_E-2a_i) \nonumber.
\end{align}
Replacing \eqref{eq:grad(a)} and~\eqref{eq:grad(b)} in \eqref{eq:gradexpl}, we obtain
\begin{align*}
\nabla G_i(\A)\cdot \delta \A &= \frac{1}{12}\sum_{E\in \E_i}\kappa_i(E)(\F(i,v_E)\cdot(-w^\perp_E + a^\perp_i) + \F(i,w_E)\cdot(v^\perp_E-a^\perp_i)-\delta a_i(-w^\perp_E+v^\perp_E)) \\
& \quad + |J(v_E,w_E,\si_i)|(\F(i,v_E)\cdot(2v_E+w_E-3a_i)+\F(i,w_E)\cdot(v_E+2w_E-3a_i) \\ &\quad -3\delta a_i\cdot(v_E+w_E-2a_i)),
\end{align*}
where
\begin{equation*}
\kappa_i(E):=\text{sign}(J(v_E,w_E,\si_i))(\|v_E-a_i\|^2+(v_E-a_i)\cdot(w_E-a_i)+\|w_E-a_i\|^2).
\end{equation*}
Finally, rearranging the last expression for $\nabla G_i(\A)\cdot\delta \A$, we get 
\begin{equation}\label{grad2}
\nabla G(\A) \cdot \delta \A = \frac{1}{\kappa_0}\sum^{\kappa_0}_{i=1}\nabla G_i(\A)\cdot \delta \A,
\end{equation}
with
\begin{align} \label{grad3}
\begin{split}
\nabla G_i(\A)\cdot \delta \A &= \frac{1}{12}\sum_{E\in \E_i}\F(i,v_E)\cdot(\kappa_i (E)(-w_E^\perp + a^\perp_i) + |J(v_E,w_E,\si_i)|(2v_E+w_E-3a_i)) \\
& \hspace{1.5cm}  +\F(i,w_E)(\kappa_i (E)(v^\perp_E-a^\perp_i)+|J(v_E,w_E,\si_i)|(v_E+2w_E-3a_i) \\[2mm]
& \hspace{1.5cm} -\delta a_i \cdot (\kappa_i (E)(-w_E^\perp+v^\perp_E)+3|J(v_E,w_E,\si_i)|(v_E+w_E-2a_i) ).\end{split} 
\end{align}
Formula \eqref{grad2} is equivalent to \eqref{grad1} for the special case $\rho=1$. 
As explained above, formula~\eqref{grad3} illustrates how the gradient of objective functions depending on Voronoi diagrams can be computed directly, without having to differentiate a parameterized integral. However, one drawback is that this explicit calculation only works for specific densities $\rho$. Also, we expect this formula to be more computationally expensive than \eqref{grad1}, since \eqref{grad1} results from a simplification ($I^1_E$ and $I^2_E$ canceling out). This means that \eqref{grad2}, \eqref{grad3} could be further simplified for more efficiency, but this reduces the advantage of the explicit calculation.

%%%%%%%%%%%%%%%%%%%%%%%%%%%%%%%
\section{Numerical experiments}\label{sec:num}
%%%%%%%%%%%%%%%%%%%%%%%%%%%%%

In this section, we show numerical experiments related to the construction of centroidal Voronoi tessellations with special desired features. CVTs are always constructed by minimizing a combination of the energy function $G(\A)$ with an additional term that forces the desired geometric feature subject to $\A \in \R^{2\kappa_0}$ in the domain $A=[0,\sqrt{\kappa_0}]^2$, where $\kappa_0$ is the number of sites. In Section~\ref{CVT} we perform a numerical experiment in which we compare in practice the expressions \eqref{grad1} and \eqref{grad2} of the gradient of $G(\A)$. In Section \ref{CVT I.V}, we deal with the problem of constructing CVTs with cells of equal area. In Section~\ref{CVT A.S.E}, we show how to avoid small edges. In Section \ref{CVT S.C.V}, we deal with the construction of CVTs with cells of different sizes for different regions of the domain $A$.

The entire code is written in Fortran 90. Voronoi diagrams are computed with the implementation provided in~\cite{MR4267494,Birgin2022,birgin:01}. All experiments were performed on a computer with an Apple M1 processor and 8 GB of RAM, running MacOS Sonoma (version 14.6.1). The code was compiled using the GFortran compiler of GCC (version 14.1.0) with the -O3 optimization directive enabled.

%%%%%%%%%%%%%%%%%
\subsection{Centroidal Voronoi tessellation} \label{CVT}
%%%%%%%%%%%%%%%%%

In this section we minimize $G(\A)$, defined in \eqref{eq:mfun2}, without additional geometric terms. For general densities $\rho$, quadrature rules must be used to evaluate \eqref{eq:mfun2}. In all our experiments, we focus on the case $\rho \equiv 1$ and the explicit form \eqref{eq:explicitG} of $G(\A)$; this allows us to preserve the exactness of the function and to save computational time in the optimization process. The optimization problems are solved with the quasi-Newton method L-BFGS-B \cite{nocedal:02,nocedal:03,nocedal:01}. Default values are used for all its parameters. The stopping criterion is set to obtain an infinity norm of the continuous projected gradient less than or equal to $\epsilon=10^{-8}$, i.e., to find an iterate $\A^k$ such that
\[
\left\| P_A \left( \A^k - \nabla G(\A^k) \right) - \A^k \right\|_{\infty} \leq \epsilon = 10^{-8},
\]
where $P_A$ represents the (orthogonal) projection operator onto the (convex) feasible set $A=[0,\sqrt{\kappa_0}]^2$. The initial point $\A^0$ of the optimization process consists of uniformly distributed random points in the domain $A$. This type of starting point is only used in this experiment, where only the function $G(\A)$ is minimized. In Sections~\ref{CVT I.V}, \ref{CVT A.S.E}, and~\ref{CVT S.C.V}, where the combination of~$G(\A)$ with an extra term is minimized, we take as starting point~$\A^0$ the approximate solution~$\A^\star$ to the problem of minimizing~$G(\A)$.

Tables \ref{tab1} and \ref{tab2} show some details of the optimization process on a set of instances with $\kappa_0 \in \{5, 10, 50, 100, 1000, 10000, 25000, 50000\}$, considering expressions \eqref{grad1} and \eqref{grad2} for computing the gradient, respectively. In the tables, $\kappa_0$ is the number of sites considered. The columns $G(\A^\star)$ and $\|\nabla G(\A^\star)\|_{\infty}$ denote the value of the objective function and the sup-norm of the continuous projected gradient at the final iterate $\A^\star$. ``it'' is the number of iterations, ``fcnt'' is the number of evaluations of the objective function, and ``Time'' is the elapsed CPU time in seconds. The column ``fcnt/it'' shows the average number of function evaluations per iteration. From these tables we can see that, for each $\kappa_0$, the values of $G(\A^\star)$ are very similar when using both gradient expressions, and that in all cases it was possible to achieve the stopping criterion imposed. The tables also show that the performance of the method was slightly faster when using the expression \eqref{grad1} than when using the expression \eqref{grad2}. This confirms the observation of Section~\ref{subsec:2.4}, where it is explained that \eqref{grad2} is expected to be more computationally demanding than \eqref{grad1}. For this reason, in Sections \ref{CVT I.V}, \ref{CVT A.S.E}, and \ref{CVT S.C.V} the expression~\eqref{grad1} is used to compute the gradient of~$G$. Figure \ref{fig1} shows the resulting diagrams for the cases $\kappa_0 \in \{ 500, 1000 \}$.

\begin{table}[ht!]
\centering
\begin{tabular}{|c| c c c c c c|}
\hline
$\kappa_0$ & $G(\A^\star)$ & $\|\nabla G(\A^\star)\|_{\infty}$ & it & fcnt & Time & fcnt/it \\
\hline
\hline  
    5 & 1.76349E$-$01 & 6.8E$-$09 &   24 &   30 &   0.001 & 1.25\\
   10 & 1.69930E$-$01 & 8.6E$-$09 &   31 &   35 &   0.002 & 1.13\\ 
   50 & 1.65505E$-$01 & 9.6E$-$09 &   88 &   95 &   0.022 & 1.08\\
  100 & 1.63885E$-$01 & 9.9E$-$09 &  115 &  124 &   0.050 & 1.08\\
  500 & 1.62358E$-$01 & 9.5E$-$09 &  254 &  278 &   0.320 & 1.09\\
 1000 & 1.62019E$-$01 & 8.7E$-$09 &  477 &  530 &   1.208 & 1.11\\
10000 & 1.61780E$-$01 & 8.8E$-$09 &  494 &  505 &  12.915 & 1.02\\
25000 & 1.61708E$-$01 & 8.3E$-$09 &  753 &  780 &  60.151 & 1.04\\
50000 & 1.61661E$-$01 & 8.7E$-$09 & 1221 & 1271 & 472.639 & 1.04\\
\hline
\end{tabular}
\caption{Details of the optimization process and the solutions found for the problem of minimizing $G(\A)$ with increasing values of $\kappa_0$ and using \eqref{grad1} for computing $\nabla G$.} 
\label{tab1}
\end{table}

\begin{table}[ht!]
\centering
\begin{tabular}{|c| c c c c c c|}
\hline
$\kappa_0$ & $G(\A^\star)$ & $\|\nabla G(\A^\star)\|_{\infty}$ & it & fcnt & Time & fcnt/it \\
\hline
\hline  
    5 & 1.76349E$-$01 & 6.8E$-$09 &   24 &   30 &   0.002 & 1.25\\
   10 & 1.69930E$-$01 & 8.6E$-$09 &   31 &   35 &   0.004 & 1.13\\ 
   50 & 1.65443E$-$01 & 9.6E$-$09 &  107 &  120 &   0.034 & 1.12\\
  100 & 1.63885E$-$01 & 9.9E$-$09 &  115 &  124 &   0.061 & 1.08\\
  500 & 1.62241E$-$01 & 4.9E$-$09 &  239 &  258 &   0.376 & 1.08\\
 1000 & 1.62170E$-$01 & 8.5E$-$09 &  202 &  208 &   0.621 & 1.03\\
10000 & 1.61783E$-$01 & 9.1E$-$09 &  507 &  522 &  16.301 & 1.03\\
25000 & 1.61719E$-$01 & 8.8E$-$09 &  736 &  762 &  63.164 & 1.04\\
50000 & 1.61649E$-$01 & 8.6E$-$09 & 1285 & 1343 & 598.587 & 1.05\\
\hline
\end{tabular}
\caption{Details of the optimization process and the solutions found for the problem of minimizing $G(\A)$ with increasing values of $\kappa_0$ and using \eqref{grad2} for computing $\nabla G$.} 
\label{tab2}
\end{table}

\begin{figure}[ht!]
\begin{center}
\resizebox{\textwidth}{!}{
\begin{tabular}{cc}
\includegraphics[scale=2]{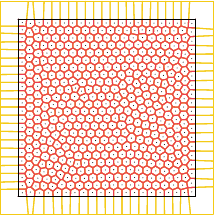} &
\includegraphics[scale=1.5]{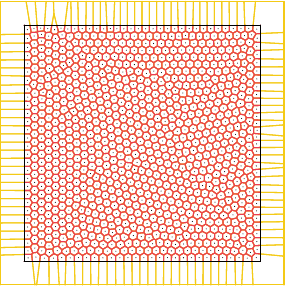}
\end{tabular}}
\end{center}
\caption{Centroidal Voronoi tessellations with $\kappa_0 \in \{500, 1000\}$. Results obtained using the gradient formula \eqref{grad1}.}
\label{fig1}
\end{figure}

\subsection{Centroidal Voronoi tessellation with cells of identical area.} \label{CVT I.V}

% | 100\% ( C - A ) / A | <= 0.1\%

In this section, we consider the merit function given by
\[
f_1(\A)\coloneq \omega \, G(\A) + J^1(\A),
\]
where
\[
J^1(\A) := \frac{1}{\kappa_0}\sum^{\kappa_0}_{i=1}[J^1_i(\A)]^2 \mbox{ with }
J^1_i(\A):=\left(\int_{V_i(\A)}\,dx\right)/\left(\frac{1}{\kappa_0}\int_A \, dx \right)-1,
\]
and $\omega \geq 0$ is given. The function $J^1(\A)$ measures the deviation of the area of each Voronoi cell~$V_i(\A)$ with respect to the average area of the cells in the domain~$A$. The purpose of minimizing $f_1(\A)$ is to find CVTs with cells of similar area, i.e., what is expected in an approximate solution $\A$ is that for all $i$, $|V_i(\A)| \approx \frac{1}{\kappa_0}\int_A \, dx = 1$, because $A=[0,\sqrt{\kappa_0}]^2$. Since $J^1_i$ corresponds to \eqref{perturbedG1} with $f\equiv 1$, applying~\eqref{eq:cellint}, we have that
\[
\nabla J^1_i(\A) \cdot \delta \A = \frac{\kappa_0}{|A|}\sum_{E\in \E^{\text{int}}_i}\frac{|E|}{\| a_i-a_{k(i,E)} \|}[\delta a_i \cdot (p_E-a_i)-\delta a_{k(i,E)}\cdot (p_E-a_{k(i,E)})],
\]
where $p_E\coloneq (v_E+w_E)/2$ and $k(i,E)$ is the index such that $E=\overline{V_i(\A)}\cap \overline{V_{k(i,E)}(\A)}$.

The choice of the parameter $\omega$ is important for the optimization process to obtain the desired results. We will present an appropriate choice for $\omega$ that was found in the particular case where $\kappa_0=10$ is used, and then show the results obtained when optimizing the cases with $\kappa_0\in \{ 500, 1000 \}$ using the same value for $\omega$. Figure~\ref{fig2}(a) shows the solution of minimizing $G(\A)$, while Figures~\ref{fig2}(b-e) show the solutions obtained by minimizing $f_1(\A)$ with $\omega \in \{ 1, 0.1, 0.01, 0.001 \}$. In the figures, the cells $V_i(\A)$ that satisfy $|J^1_i(\A)|>10^{-3}$ are colored green. Therefore, the uncolored cells $V_i$ satisfy $|J^1_i(\A)| \leq 10^{-3}$, which means that the uncolored cells have an area that is very close to the desired area (the relative error of the cell area to the ideal area is less than or equal to 0.1\%). The desired goal is reached when all cells are uncolored, which is the case for $\omega=0.001$. Table~\ref{tab3} shows details of the solutions found. In particular, it shows the area of each cell. It is interesting to compare the cell areas of the diagrams constructed by minimizing $G$ alone and $f_1$ with $\omega=0.001$. Table \ref{tab4} shows some details of the optimization process. In the table, the columns $f_1(\A^\star)$ and $\|f_1(\A^\star)\|_\infty$ indicate the value of the objective function and the sup-norm of the continuous projected gradient at the final iterate $\A^\star$. The columns $G(\A^\star)$ and $J^1(\A^\star)$ identify the value of the CVT energy function and the function $J^1$ at the final iterate $\A^\star$. The remaining columns contain the number of iterations, the number of function evaluations, and the CPU time in seconds. The numbers in the table show that, regardless of the value of $\omega$, the problems were easily solved, with the optimization process stopping in all cases due to the imposed stopping criterion. The value of $G$ in the solution $\A^\star$ obtained by minimizing $G$ alone is $G(\A^\star) = $ 1.69930E-01; see Table~\ref{tab2}. As $\omega$ decreases, Table~\ref{tab4} shows that $J^1(\A^\star)$ improves by at least an order of magnitude, while $G(\A^\star)$ deteriorates only slightly, remaining close to the value obtained by minimizing $G$ alone. This means that minimizing $f_1$ succeeds in designing a diagram with the desired geometric properties at the cost of only a small increase in the CVT energy function, which is the function that a CVT diagram should minimize. In short, minimizing $f_1$ succeeds in designing a CVT diagram with the desired characteristics.

\begin{figure}[ht!] 
\begin{subfigure}{1 \linewidth}
\centering
\includegraphics[scale=2.25]{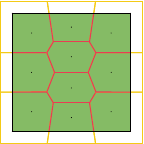}
\caption{ $G(\A)$ }
% \label{fig:1j}
\end{subfigure}
\begin{subfigure}{0.5 \linewidth}
\centering
\includegraphics[scale=2.25]{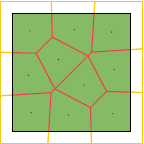}
\caption{$f(\A)$ with $\omega = 1$}
% \label{fig:2j}
\end{subfigure}
\begin{subfigure}{0.5 \linewidth}
\centering
\includegraphics[scale=2.25]{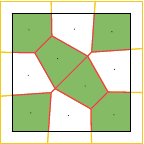}
\caption{$f(\A)$ with $\omega = 0.1$}
% \label{fig:3j}
\end{subfigure}
\begin{subfigure}{0.5 \linewidth}
\centering
\includegraphics[scale=2.25]{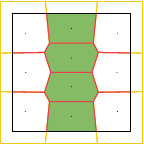}
\caption{$f(\A)$ with $\omega = 0.01$}
% \label{fig:4j}
\end{subfigure}
\begin{subfigure}{0.5 \linewidth}
\centering
\includegraphics[scale=2.25]{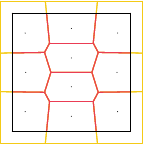}
\caption{$f(\A)$ with $\omega = 0.001$}
% \label{fig:5j}
\end{subfigure}
\caption{Centroidal Voronoi tessellations with $\kappa_0=10$. In (a) we show the result of minimizing the function $G(\A)$. In (b-e) we show the diagrams obtained by minimizing $f_1(\A) = \omega \, G(\A)+J^1(\A)$ with decreasing values of $\omega$.}
\label{fig2}
\end{figure}

\begin{table}[ht!]
\centering
\resizebox{\textwidth}{!}{
\begin{tabular}{|c|c|c|c|c|c|c|c|c|c|c|}
\hline
Cell 
& \multicolumn{2}{|c|}{$G(\A)$}
& \multicolumn{2}{|c|}{$f_1(\A)$ with $\omega=1$} 
& \multicolumn{2}{|c|}{$f_1(\A)$ with $\omega=0.1$} 
& \multicolumn{2}{|c|}{$f_1(\A)$ with $\omega=0.01$} 
& \multicolumn{2}{|c|}{$f_1(\A)$ with $\omega=0.001$} \\
\hline
$i $ 
& $|V_i(\A^\star)|$ & $|J^1_i(\A^\star)|$ 
& $|V_i(\A^\star)|$ & $|J^1_i(\A^\star)|$ 
& $|V_i(\A^\star)|$ & $|J^1_i(\A^\star)|$ 
& $|V_i(\A^\star)|$ & $|J^1_i(\A^\star)|$ 
& $|V_i(\A^\star)|$ & $|J^1_i(\A^\star)|$ \\
\hline
\hline
1  & 8.31E$-$01 & 1.69E$-$01 & 9.97E$-$01 & 3.28E$-$03 & 9.99E$-$01 & 7.09E$-$04 & 9.98E$-$01 & 1.58E$-$03 & 1.00E$+$00 & 1.60E$-$04\\
2  & 1.08E$+$00 & 8.10E$-$02 & 9.97E$-$01 & 3.28E$-$03 & 9.99E$-$01 & 7.09E$-$04 & 1.00E$+$00 & 9.40E$-$04 & 1.00E$+$00 & 9.50E$-$05\\
3  & 1.09E$+$00 & 8.75E$-$02 & 1.04E$+$00 & 4.08E$-$02 & 1.01E$+$00 & 6.30E$-$03 & 1.00E$+$00 & 9.86E$-$04 & 1.00E$+$00 & 9.95E$-$05\\
4  & 1.09E$+$00 & 8.75E$-$02 & 1.04E$+$00 & 4.08E$-$02 & 1.01E$+$00 & 6.30E$-$03 & 1.00E$+$00 & 9.86E$-$04 & 1.00E$+$00 & 9.95E$-$05\\
5  & 9.13E$-$01 & 8.72E$-$02 & 9.55E$-$01 & 4.47E$-$02 & 9.94E$-$01 & 6.43E$-$03 & 9.99E$-$01 & 1.33E$-$03 & 1.00E$+$00 & 1.34E$-$04\\
6  & 1.09E$+$00 & 8.75E$-$02 & 1.01E$+$00 & 1.04E$-$02 & 1.00E$+$00 & 1.55E$-$03 & 1.00E$+$00 & 9.86E$-$04 & 1.00E$+$00 & 9.95E$-$05\\
7  & 9.13E$-$01 & 8.72E$-$02 & 9.55E$-$01 & 4.47E$-$02 & 9.94E$-$01 & 6.43E$-$03 & 9.99E$-$01 & 1.33E$-$03 & 1.00E$+$00 & 1.34E$-$04\\
8  & 1.08E$+$00 & 8.10E$-$02 & 9.97E$-$01 & 3.28E$-$03 & 9.99E$-$01 & 7.09E$-$04 & 1.00E$+$00 & 9.40E$-$04 & 1.00E$+$00 & 9.50E$-$05\\
9  & 8.31E$-$01 & 1.69E$-$01 & 9.97E$-$01 & 3.28E$-$03 & 9.99E$-$01 & 7.09E$-$04 & 9.98E$-$01 & 1.58E$-$03 & 1.00E$+$00 & 1.60E$-$04\\
10 & 1.09E$+$00 & 8.75E$-$02 & 1.01E$+$00 & 1.04E$-$02 & 1.00E$+$00 & 1.55E$-$03 & 1.00E$+$00 & 9.86E$-$04 & 1.00E$+$00 & 9.95E$-$05\\
\hline
\end{tabular}}
\caption{Obtained values for the area $|V_i(\A^\star)|$ of each cell and $J^1(\A^\star)$ after minimizing the functions $G(\A)$ and $f_1(\A) = \omega \, G(\A) + J^1(\A)$ with $\omega \in \{1, 0.1, 0.01, 0.001\}$.}
\label{tab3}
\end{table}

\begin{table}[ht!]
\centering
\begin{tabular}{|cccccccc|}
\hline
$\omega$ & $f_1(\A^\star)$ & $\|\nabla f_1(\A^\star)\|_{\infty}$ & $G(\A^\star)$ & $J^1(\A^\star)$ & it & fcnt & Time  \\
\hline
\hline
1     & 1.73112E$-$01 & 1.9E$-$09 & 1.72354E$-$01 & 7.57491E$-$04 & 44 &  63 & 0.004\\
0.1   & 1.74133E$-$02 & 9.1E$-$09 & 1.73964E$-$01 & 1.68787E$-$05 & 94 & 119 & 0.005\\
0.01  & 1.82952E$-$03 & 9.7E$-$09 & 1.82810E$-$01 & 1.41961E$-$06 & 29 &  32 & 0.002\\
0.001 & 1.83081E$-$04 & 2.7E$-$09 & 1.83066E$-$01 & 1.44641E$-$08 & 35 &  38 & 0.002\\
\hline
\end{tabular}
\caption{Details of the process of minimizing the function $f_1(\A) = \omega \, G(\A)+ J^1(\A)$ varying~$\omega$.}
\label{tab4}
\end{table}

Taking into account $\kappa_0=1000$, we also perform the experiment of minimizing only $G$ and $f_1$ with $\omega \in \{ 1, 0.1, 0.01, 0.001 \}$. Figure \ref{fig3} shows the distribution of cell areas in the four different approximate solutions $\A^\star(\omega)$ found by minimizing the function $f_1(\A)$ varying $\omega$. Consider the solutions $\A^\star(\omega)$ for the different values of $\omega$ and let $\nu_1 = \min_{\{\omega,i\}} |V_i(\A^\star(\omega))|$, $\nu_{101} = \max_{\{\omega,i\}} |V_i(\A^\star(\omega))|$, $\Delta \nu = (\nu_{101} - \nu_1)/100$, and $\nu_j = \nu_1 + j \, \Delta \nu$ for $j=2,\dots,100$. The figure shows the distribution of the values of $|V_i(\A^\star(\omega))|$ for $\omega\in \{ 1, 0.1, 0.01, 0.001 \}$ and $i=1,\dots,\kappa_0$ over the intervals $[\nu_j,\nu_{j+1}]$ for $j=1,\dots,100$. More specifically, there is a graph for each $\A^\star(\omega)$ and, for each value $\frac{1}{2}(\nu_j+\nu_{j+1})$ in the abscissa, the graph shows in the ordinate the proportion of cells $V_i(\A^\star(\omega))$ whose area $|V_i(\A^\star(\omega))|$ is in the interval $[\nu_j,\nu_{j+1}]$. The plot clearly shows that, the smaller the value of $\omega$, the larger the proportion of cells in the solution $\A^\star(\omega)$ whose area is close to~1. Figure~\ref{fig4}(a-b) shows the diagrams obtained by minimizing $G$ alone and $f_1$ with $\omega = 0.001$, respectively. In both figures, cells $V_i(\A)$ such that $|J^1_i(\A)|>10^{-3}$ are painted green. Basically speaking, this means that when minimizing $G$ alone, almost none of the cells have the desired area, while when minimizing $f_1$ with $\omega=0.001$, all have the desired area.

\begin{figure}[ht!]
\centering
\includegraphics[width=0.95\linewidth]{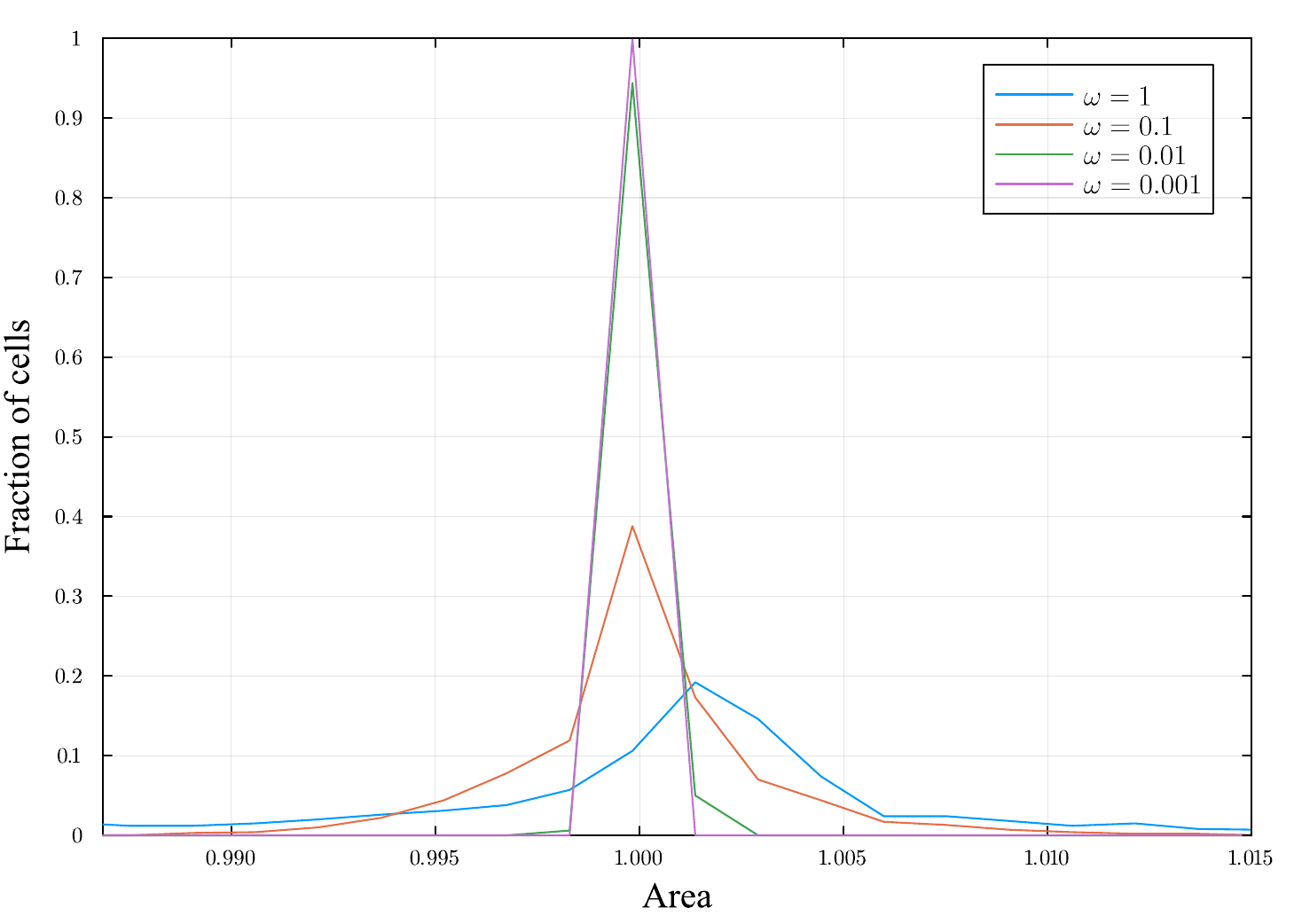}
\caption{Distribution of the cell areas in the approximate solutions found when minimizing the function $f_1(\A) = \omega \, G(\A)+J^1(\A)$ with $\kappa_0=1000$ considering $\omega\in \{ 1, 0.1, 0.01, 0.001 \}$.}
\label{fig3}
\end{figure}

\begin{figure}[ht!]
\begin{subfigure}{0.5 \linewidth}
\centering
\includegraphics[scale=1.6]{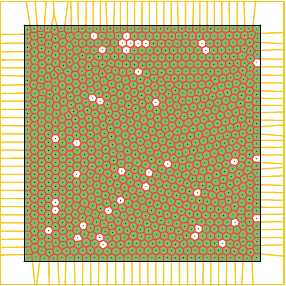}
\caption{$G(\A)$}
\end{subfigure}
\begin{subfigure}{0.5\linewidth}
\centering
\includegraphics[scale=1.6]{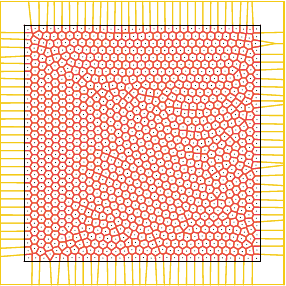}
\caption{$f_1(\A)$ with $\omega=0.001$} 
\end{subfigure}
\caption{Centroidal Voronoi tessellation with $\kappa_0=1000$. In (a) we show the diagram resulting from minimizing the function $G(\A)$. In (b) we show the diagrams obtained by minimizing $f_1(\A) = \omega \, G(\A) + J^1(\A)$ with $\omega=0.001$.}
\label{fig4}
\end{figure}

%%%%%%%%%%%%
\subsection{Centroidal Voronoi tessellation avoiding cells with small edges} \label{CVT A.S.E}
%%%%%%%%%%%%%

In this section, we consider the size of the edges of the cells. Scrutinizing the cells in Figure~\ref{fig4}(a), we observe the presence of cells with small edges. Specifically, given a fraction $c_2 \in (0,1)$, we say that ``an edge $E$ of a cell $V_i(\A)$ is small'' if its size $|E|$ is smaller than $c_2 \bar E_i$, where $\bar E_i = P_i / n_i$ is the average of the edge sizes of the cell $V_i(\A)$, $P_i$ is the perimeter of the cell, and $n_i$ is the number of edges of the cell. Given a tolerance $c_2 \in (0,1)$, to construct CVTs that do not have cells with small edges, we consider the objective function
\[
f_2(\A) := \omega \, G(\A) + J^2(\A),
\]
where 
\begin{equation}\label{eq:GJ2b}
J^2(\A) := \sum^{\kappa_0}_{i=1}J^2_i(\A) \text{ with } J^2_i(\A) := \frac{1}{n_i}\sum_{E\in \E_i}\min\left\{ 0, \frac{|E|}{\bar{E}_i}-c_2 \right \}^2
\end{equation}
and $\omega \geq 0$ is a given constant. In~\eqref{eq:GJ2b}, $\E_i$ is the set of edges of cell $V_i(\A)$ and $n_i = |\E_i|$. Given $c_2 \in (0,1)$, if all edges $E\in \E_i$ of a cell $V_i(\A)$ satisfy $|E| \geq c_2 \bar{E}_i$, i.e., if the size of each cell edge is at least $100\% \times c_2$ of the average size, then $|E|/\bar{E}_i - c_2 \geq 0$ for all $E$, and hence $J^2_i$ vanishes. Thus, $J^2_i$ measures the violation of the constraints on the minimum size of the edges of the cell $V_i(\A)$. The gradient $\nabla J^2_i$ depends on $\nabla |E|$ and $|E|$ corresponds to \eqref{eq:perturbedG2} with $f \equiv 1$. Therefore, by (\ref{eq:35},\ref{eq:36}), we have that
\[
\nabla J^2_i(\A) \cdot \delta \A =  \sum_{E\in\E_i}\mu(E)([\F(i,w_E) - \F(i,v_E)] \cdot \tau_E)
\]
with
\[
\mu(E):= \frac{2}{P_i}\left( \min\left\lbrace 0, \frac{|E|}{\bar{E}_i}-c_2 \right\rbrace-\sum_{\tilde{E}\in \E_i}\frac{|\tilde{E}|}{P_i}\min\left\lbrace 0, \frac{|\tilde{E}|}{\bar{E}_i}-c_2 \right\rbrace\right).
\]

To obtain an appropriate value for $\omega$, we analyze the results of minimizing $f_2(\A)$ with $c_2=0.5$ and $\kappa_0=10$, varying $\omega \in \{1,0.1,0.01\}$. Table \ref{tab5} shows the results. The numbers in the table show that all the problems were easily solved and stopped by the imposed criterion. Furthermore, we can see that, as expected, the smaller the value of $\omega$, the smaller the value of $J^2(\A^\star(\omega))$. We can also see that as $\omega$ decreases, the value of $G$ increases. However, it increases only slightly and always remains close to the value obtained by minimizing the function $G$ alone, which is 1.69930E-01 (see Table~\ref{tab2}). This suggests that the solutions found, which satisfy the desired geometric properties, also preserve the property of minimizing the CVT energy function. Figures~\ref{fig5}(a-d) show the diagrams obtained when minimizing $G(\A)$ and $f_2(\A)$ with $c_2=0.5$ and varying $\omega \in \{1,0.1,0.01\}$. Considering $c_2=0.5$ means that we expect diagrams in which no cell has an edge smaller than 50\% of the average size of its edges. In the figures, colored cells are those that do not satisfy the desired property. This means that satisfactory results were obtained when minimizing $f_2(\A)$ with $\omega=0.1$ and $\omega=0.01$. Table~\ref{tab6} analyzes the obtained solutions in detail. For each cell of each solution, the table shows the size of its edges, the average size of the edges, the ratio of the smallest edge to the average, and the value of $J_i^2$. The table clearly shows that as $\omega$ decreases, the smallest edge of each cell approaches the smallest required proportion, that is, 50\%.

\begin{table}[ht!]
\centering
\begin{tabular}{|c c c c c c c c|}
\hline
$\omega$ & $f_2(\A^\star)$ & $\| \nabla f_2(\A^\star) \|_{\infty}$ & $G(\A^\star)$ & $J^2(\A^\star)$ & it & fcnt & Time \\
\hline
\hline
1    & 1.70217E$-$01 & 1.9E$-$09 & 1.70079E$-$01 & 1.37640E$-$04 &  8 &  9 & 0.002\\
0.1  & 1.70437E$-$02 & 7.3E$-$09 & 1.70393E$-$01 & 4.34693E$-$06 & 17 & 21 & 0.002\\
0.01 & 1.70479E$-$03 & 9.9E$-$09 & 1.70474E$-$01 & 5.12211E$-$08 & 27 & 30 & 0.003\\
\hline
\end{tabular}
\caption{Details of the process of minimizing $f_2(\A) = \omega \, G(\A) + J^2(\A)$ with $c_2=0.5$ varying $\omega$.}
\label{tab5}
\end{table}

\begin{table}[ht!]
\centering
\resizebox{\textwidth}{!}{
\begin{tabular}{ |c|c| c c c c c c | c c c|}
\hline
& Cell $i$ & \multicolumn{6}{c|}{Edges sizes} & $\bar{E}_i$ & $100(|E^{\min}_i| / \bar{E}_i )\%$ & $J^2_i(\A^\star)$ \\
\hline
\hline
\multirow{10}{*}{\rotatebox{90}{Min $G(\A)$}}
&1  & 1.20E$+$00 & 7.75E$-$01 & 9.70E$-$01 & 7.75E$-$01 & -          & -          & 9.30E$-$01 & 83.4\%       & 0.00E$+$00\\
&2  & 1.05E$+$00 & 9.24E$-$01 & 5.65E$-$01 & 5.65E$-$01 & 9.24E$-$01 & -          & 8.06E$-$01 & 70.0\%       & 0.00E$+$00\\
&3  & 3.36E$-$01 & 7.75E$-$01 & 9.82E$-$01 & 1.05E$+$00 & 9.24E$-$01 & -          & 8.14E$-$01 & \pmb{41.3\%} & 1.51E$-$03\\
&4  & 9.82E$-$01 & 1.05E$+$00 & 9.24E$-$01 & 3.36E$-$01 & 7.75E$-$01 & -          & 8.14E$-$01 & \pmb{41.3\%} & 1.51E$-$03\\
&5  & 3.36E$-$01 & 9.70E$-$01 & 3.36E$-$01 & 5.65E$-$01 & 9.02E$-$01 & 5.65E$-$01 & 6.12E$-$01 & 54.9\%       & 0.00E$+$00\\
&6  & 9.24E$-$01 & 1.05E$+$00 & 9.82E$-$01 & 7.75E$-$01 & 3.36E$-$01 & -          & 8.14E$-$01 & \pmb{41.3\%} & 1.51E$-$03\\
&7  & 9.02E$-$01 & 5.65E$-$01 & 3.36E$-$01 & 9.70E$-$01 & 3.36E$-$01 & 5.65E$-$01 & 6.12E$-$01 & 54.9\%       & 0.00E$+$00\\
&8  & 5.65E$-$01 & 9.24E$-$01 & 1.05E$+$00 & 9.24E$-$01 & 5.65E$-$01 & -          & 8.06E$-$01 & 70.0\%       & 0.00E$+$00\\
&9  & 7.75E$-$01 & 1.20E$+$00 & 7.75E$-$01 & 9.70E$-$01 & -          & -          & 9.30E$-$01 & 83.4\%       & 0.00E$+$00\\
&10 & 1.05E$+$00 & 9.82E$-$01 & 7.75E$-$01 & 3.36E$-$01 & 9.24E$-$01 & -          & 8.14E$-$01 & \pmb{41.3\%} & 1.51E$-$03\\
\hline
\multicolumn{11}{c}{}\\
\hline
& Cell $i$ & \multicolumn{6}{c|}{Edges sizes} & $\bar{E}_i$ & $100(|E^{\min}_i| / \bar{E}_i )\%$ & $J^2_i(\A^\star)$    \\
\hline
\hline
\multirow{10}{*}{\rotatebox{90}{Min $f_2(\A)$ with $\omega=1$}}
&1  & 1.22E$+$00 & 7.54E$-$01 & 9.47E$-$01 & 7.54E$-$01 & -          & -          & 9.18E$-$01 & 82.2\%       & 0.00E$+$00\\
&2  & 1.07E$+$00 & 9.22E$-$01 & 5.56E$-$01 & 5.56E$-$01 & 9.22E$-$01 & -          & 8.05E$-$01 & 69.1\%       & 0.00E$+$00\\
&3  & 3.73E$-$01 & 7.54E$-$01 & 9.73E$-$01 & 1.05E$+$00 & 9.22E$-$01 & -          & 8.14E$-$01 & \pmb{45.9\%} & 3.44E$-$04\\
&4  & 9.73E$-$01 & 1.05E$+$00 & 9.22E$-$01 & 3.73E$-$01 & 7.54E$-$01 & -          & 8.14E$-$01 & \pmb{45.9\%} & 3.44E$-$04\\
&5  & 3.73E$-$01 & 9.47E$-$01 & 3.73E$-$01 & 5.56E$-$01 & 9.01E$-$01 & 5.56E$-$01 & 6.18E$-$01 & 60.4\%       & 0.00E$+$00\\
&6  & 9.22E$-$01 & 1.05E$+$00 & 9.73E$-$01 & 7.54E$-$01 & 3.73E$-$01 & -          & 8.14E$-$01 & \pmb{45.9\%} & 3.44E$-$04\\
&7  & 9.01E$-$01 & 5.56E$-$01 & 3.73E$-$01 & 9.47E$-$01 & 3.73E$-$01 & 5.56E$-$01 & 6.18E$-$01 & 60.4\%       & 0.00E$+$00\\
&8  & 5.56E$-$01 & 9.22E$-$01 & 1.07E$+$00 & 9.22E$-$01 & 5.56E$-$01 & -          & 8.05E$-$01 & 69.1\%       & 0.00E$+$00\\
&9  & 7.54E$-$01 & 1.22E$+$00 & 7.54E$-$01 & 9.47E$-$01 & -          & -          & 9.18E$-$01 & 82.2\%       & 0.00E$+$00\\
&10 & 1.05E$+$00 & 9.73E$-$01 & 7.54E$-$01 & 3.73E$-$01 & 9.22E$-$01 & -          & 8.14E$-$01 & \pmb{45.9\%} & 3.44E$-$04\\
\hline
\multicolumn{11}{c}{}\\
\hline
& Cell $i$ & \multicolumn{6}{c|}{Edges sizes} & $\bar{E}_i$ & $100(|E^{\min}_i| / \bar{E}_i )\%$ & $J^2_i(\A^\star)$  \\
\hline
\hline
\multirow{10}{*}{\rotatebox{90}{Min $f_2(\A)$ with $\omega=0.1$}}
&1  & 1.23E$+$00 & 7.40E$-$01 & 9.29E$-$01 & 7.40E$-$01 & -          & -          & 9.10E$-$01 & 81.3\%       & 0.00E$+$00\\
&2  & 1.08E$+$00 & 9.20E$-$01 & 5.50E$-$01 & 5.50E$-$01 & 9.20E$-$01 & -          & 8.04E$-$01 & 68.4\%       & 0.00E$+$00\\
&3  & 4.01E$-$01 & 7.40E$-$01 & 9.66E$-$01 & 1.04E$+$00 & 9.20E$-$01 & -          & 8.14E$-$01 & \pmb{49.3\%} & 1.09E$-$05\\
&4  & 9.66E$-$01 & 1.04E$+$00 & 9.20E$-$01 & 4.01E$-$01 & 7.40E$-$01 & -          & 8.14E$-$01 & \pmb{49.3\%} & 1.09E$-$05\\
&5  & 4.01E$-$01 & 9.29E$-$01 & 4.01E$-$01 & 5.50E$-$01 & 9.02E$-$01 & 5.50E$-$01 & 6.22E$-$01 & 64.4\%       & 0.00E$+$00\\
&6  & 9.20E$-$01 & 1.04E$+$00 & 9.66E$-$01 & 7.40E$-$01 & 4.01E$-$01 & -          & 8.14E$-$01 & \pmb{49.3\%} & 1.09E$-$05\\
&7  & 9.02E$-$01 & 5.50E$-$01 & 4.01E$-$01 & 9.29E$-$01 & 4.01E$-$01 & 5.50E$-$01 & 6.22E$-$01 & 64.4\%       & 0.00E$+$00\\
&8  & 5.50E$-$01 & 9.20E$-$01 & 1.08E$+$00 & 9.20E$-$01 & 5.50E$-$01 & -          & 8.04E$-$01 & 68.4\%       & 0.00E$+$00\\
&9  & 7.40E$-$01 & 1.23E$+$00 & 7.40E$-$01 & 9.29E$-$01 & -          & -          & 9.10E$-$01 & 81.3\%       & 0.00E$+$00\\
&10 & 1.04E$+$00 & 9.66E$-$01 & 7.40E$-$01 & 4.01E$-$01 & 9.20E$-$01 & -          & 8.14E$-$01 & \pmb{49.3\%} & 1.09E$-$05\\
\hline
\multicolumn{11}{c}{}\\
\hline
&Cell & \multicolumn{6}{c|}{Edges sizes} & $\bar{E}_i$ & $100(|E^{\min}_i| / \bar{E}_i )\%$ & $J^2_i(\A^\star)$  \\
\hline
\hline
\multirow{10}{*}{\rotatebox{90}{Min $f_2(\A)$ with $\omega=0.01$}}
&1  & 1.23E$+$00 & 7.37E$-$01 & 9.26E$-$01 & 7.37E$-$01 & -          & -          & 9.08E$-$01 & 81.1\%       & 0.00E$+$00\\
&2  & 1.08E$+$00 & 9.20E$-$01 & 5.48E$-$01 & 5.48E$-$01 & 9.20E$-$01 & -          & 8.04E$-$01 & 68.2\%       & 0.00E$+$00\\
&3  & 4.06E$-$01 & 7.37E$-$01 & 9.65E$-$01 & 1.04E$+$00 & 9.20E$-$01 & -          & 8.14E$-$01 & \pmb{49.9\%} & 1.28E$-$07\\
&4  & 9.65E$-$01 & 1.04E$+$00 & 9.20E$-$01 & 4.06E$-$01 & 7.37E$-$01 & -          & 8.14E$-$01 & \pmb{49.9\%} & 1.28E$-$07\\
&5  & 4.06E$-$01 & 9.26E$-$01 & 4.06E$-$01 & 5.48E$-$01 & 9.02E$-$01 & 5.48E$-$01 & 6.23E$-$01 & 65.2\%       & 0.00E$+$00\\
&6  & 9.20E$-$01 & 1.04E$+$00 & 9.65E$-$01 & 7.37E$-$01 & 4.06E$-$01 & -          & 8.14E$-$01 & \pmb{49.9\%} & 1.28E$-$07\\
&7  & 9.02E$-$01 & 5.48E$-$01 & 4.06E$-$01 & 9.26E$-$01 & 4.06E$-$01 & 5.48E$-$01 & 6.23E$-$01 & 65.2\%       & 0.00E$+$00\\
&8  & 5.48E$-$01 & 9.20E$-$01 & 1.08E$+$00 & 9.20E$-$01 & 5.48E$-$01 & -          & 8.04E$-$01 & 68.2\%       & 0.00E$+$00\\
&9  & 7.37E$-$01 & 1.23E$+$00 & 7.37E$-$01 & 9.26E$-$01 & -          & -          & 9.08E$-$01 & 81.1\%       & 0.00E$+$00\\
&10 & 1.04E$+$00 & 9.65E$-$01 & 7.37E$-$01 & 4.06E$-$01 & 9.20E$-$01 & -          & 8.14E$-$01 & \pmb{49.9\%} & 1.28E$-$07\\
\hline
\end{tabular}}
\caption{Details of cells after minimizing the objective functions $G(\A)$ and $f_2(\A)$ with $\omega \in \{1,0.1,0.01\}$.}
\label{tab6}
\end{table}

Upon determining the value $\omega=0.01$, additional experiments with $\kappa_0 \in \{ 500, 1000 \}$ were performed. The experiments consisted of minimizing $G(\A)$ as well as $f_2(\A)$ with $\omega=0.01$, varying $c_2$. Tables~\ref{tab7} and~\ref{tab8} and Figures~\ref{fig6} and~\ref{fig7} show the results. The results in the tables show that, when the value of $c_2$ increases, the optimization process is slightly more expensive. However, in all cases, the problems were easily solved. The interesting observation is that when $c_2$ increases, solutions $\A^\star$ with more restrictive geometric conditions are calculated with a very small increase in the value of $G(\A^\star)$. Moreover, the values of $G$ remain close to the value obtained when minimizing $G$ alone (see Table~\ref{tab2}). We can also see that as $c_2$ increases, $J^2(\A^{\star})$ also increases and, at $c_2=0.9$, the geometric condition is not met. In the figures, seven shades of blue were used to paint the cells. The darkest cells are those with side sizes between 10-20\% of the mean. The lighter cells are those with side sizes between 70 and 80\% of the mean. The figures show that, as the value of $c_2$ increases, cells with edge sizes smaller than 80\% of the average size disappear.

\begin{table}[ht!]
\centering
\begin{tabular}{|cccccccc|}
\hline
$c_2$ & $f_2(\A^\star)$ & $\| \nabla f_2(\A^\star) \|_{\infty}$ & $G(\A^\star)$ & $J^2(\A^\star)$ & it & fcnt & Time\\
\hline
\hline
0.2 & 1.62359E$-$03 & 6.4E$-$09 & 1.62359E$-$01 & 1.36588E$-$11 &  64 & 101 & 0.141\\
0.3 & 1.62361E$-$03 & 8.4E$-$09 & 1.62361E$-$01 & 6.44011E$-$11 & 114 & 145 & 0.186\\
0.4 & 1.62368E$-$03 & 9.2E$-$09 & 1.62368E$-$01 & 2.48965E$-$10 & 233 & 311 & 0.350\\
0.5 & 1.62384E$-$03 & 9.3E$-$09 & 1.62384E$-$01 & 8.14196E$-$10 & 335 & 454 & 0.498\\
0.6 & 1.62424E$-$03 & 6.1E$-$09 & 1.62423E$-$01 & 2.89568E$-$09 & 345 & 427 & 0.480\\
0.7 & 1.62564E$-$03 & 8.6E$-$09 & 1.62562E$-$01 & 1.46599E$-$08 & 507 & 558 & 0.638\\
0.8 & 1.63419E$-$03 & 8.6E$-$09 & 1.63398E$-$01 & 2.15309E$-$07 & 633 & 642 & 0.781\\
0.9 & 1.74145E$-$03 & 9.0E$-$09 & 1.69973E$-$01 & 4.17186E$-$05 & 722 & 753 & 0.997\\
\hline
\end{tabular}
\caption{Details of the optimization process and the solutions found for the problem of finding centroidal Voronoi tessellations that avoid cells with relatively small edges for $\kappa_0=500$.}
\label{tab7}
\end{table}

\begin{table}[ht!]
\centering
\begin{tabular}{|c c c c c c c c|}
\hline
$c_2$ & $f_2(\A^\star)$ & $\| \nabla f_2(\A^\star) \|_{\infty}$ & $G(\A^\star)$ & $J^2(\A^\star)$ & it & fcnt & Time \\
\hline
\hline
0.3 & 1.62019E$-$03 & 9.3E$-$09 & 1.62019E$-$01 & 3.01081E$-$12 &   48 &  63  & 0.179\\
0.4 & 1.62022E$-$03 & 9.8E$-$09 & 1.62022E$-$01 & 7.05410E$-$11 &   81 &  110 & 0.284\\
0.5 & 1.62032E$-$03 & 6.9E$-$09 & 1.62032E$-$01 & 3.57745E$-$10 &  235 &  300 & 0.711\\
0.6 & 1.62063E$-$03 & 4.6E$-$09 & 1.62063E$-$01 & 1.33134E$-$09 &  310 &  387 & 0.918\\
0.7 & 1.62174E$-$03 & 8.7E$-$09 & 1.62173E$-$01 & 8.84976E$-$09 &  475 &  512 & 1.235\\
0.8 & 1.62862E$-$03 & 9.1E$-$09 & 1.62850E$-$01 & 1.23288E$-$07 &  777 &  787 & 1.970\\
0.9 & 1.71539E$-$03 & 9.5E$-$09 & 1.67819E$-$01 & 3.72017E$-$05 & 1357 & 1450 & 4.031\\
\hline
\end{tabular}
\caption{Details of the optimization process and the solutions found for the problem of finding centroidal Voronoi tessellations that avoid cells with relatively small edges for $\kappa_0=1000$.}
\label{tab8}
\end{table}

\begin{figure}[ht!]
\begin{subfigure}{0.5 \linewidth}
\centering
\includegraphics[scale=2.25]{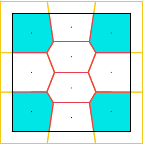}
\caption{$G(\A)$}
\end{subfigure}
\begin{subfigure}{0.5 \linewidth}
\centering
\includegraphics[scale=2.25]{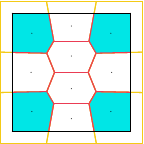}
\caption{$f_2(\A)$ with $\omega=1$} 
\end{subfigure}
\begin{subfigure}{0.5 \linewidth}
\centering
\includegraphics[scale=2.25]{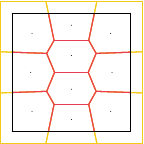}
\caption{$f_2(\A)$ with $\omega=0.1$} 
\end{subfigure}
\begin{subfigure}{0.5 \linewidth}
\centering
\includegraphics[scale=2.25]{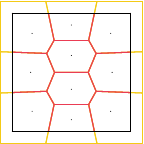}
\caption{$f_2(\A)$ with $\omega=0.01$} 
\end{subfigure}
\caption{Centroidal Voronoi tessellation with $\kappa_0=10$. In (a) we show the result of minimizing $G(\A)$. In (b-d) we show the result of minimizing $f_2(\A) = \omega \, G(\A)+J^2(\A)$ with $c_2=0.5$ and $\omega\in \{1, 0.1, 0.01\}$, respectively.}
\label{fig5}
\end{figure}

\begin{figure}[ht!]
\begin{subfigure}{0.33 \linewidth}
\centering
\includegraphics[scale=1.0]{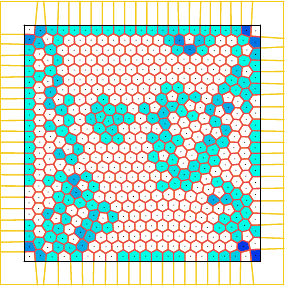}
\caption{$G(\A)$}
% \label{fig:e4a}
\end{subfigure}
\begin{subfigure}{0.33 \linewidth}
\centering
\includegraphics[scale=1.0]{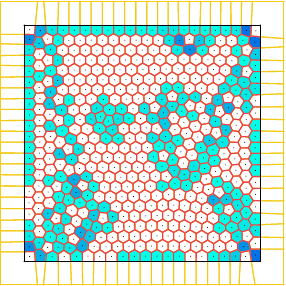}
\caption{$f_2(\A)$ with $c_2=0.2$} 
% \label{fig:e4b}
\end{subfigure}
\begin{subfigure}{0.33 \linewidth}
\centering
\includegraphics[scale=1.0]{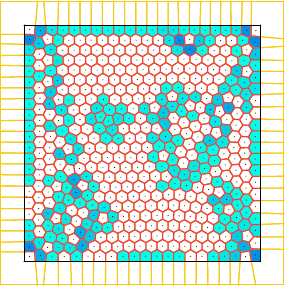}
\caption{$f_2(\A)$ with $c_2=0.3$}
% \label{fig:e4c}
\end{subfigure}
\begin{subfigure}{0.33 \linewidth}
\centering
\includegraphics[scale=1.0]{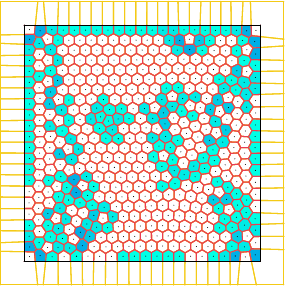}
\caption{$f_2(\A)$ with $c_2=0.4$} 
% \label{fig:e4d}
\end{subfigure}
\begin{subfigure}{0.33 \linewidth}
\centering
\includegraphics[scale=1.0]{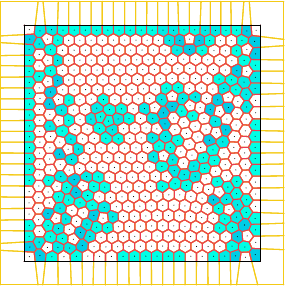}
\caption{$f_2(\A)$ with $c_2=0.5$} 
% \label{fig:e4e}
\end{subfigure}
\begin{subfigure}{0.33 \linewidth}
\centering
\includegraphics[scale=1.0]{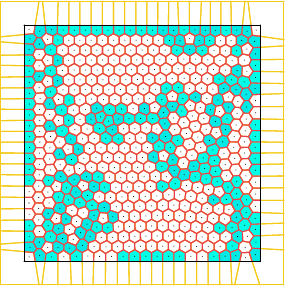}
\caption{$f_2(\A)$ with $c_2=0.6$}
% \label{fig:e4c}
\end{subfigure}
\begin{subfigure}{0.33 \linewidth}
\centering
\includegraphics[scale=1.0]{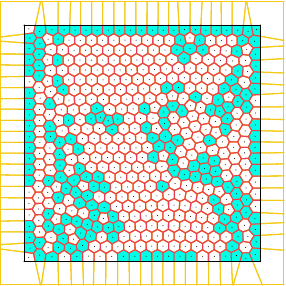}
\caption{$f_2(\A)$ with $c_2=0.7$} 
% \label{fig:e4d}
\end{subfigure}
\begin{subfigure}{0.33 \linewidth}
\centering
\includegraphics[scale=1.0]{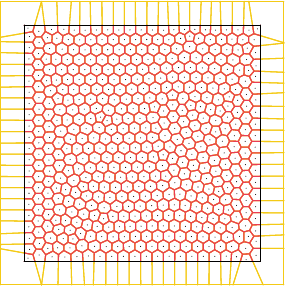}
\caption{$f_2(\A)$ with $c_2=0.8$} 
% \label{fig:e4e}
\end{subfigure}
\caption{Centroidal Voronoi tessellation with $\kappa_0=500$. In (a) we show the result of minimizing the function $G(\A)$. The darker the cell, the more unbalanced the sizes of its edges. In (b-h), preserving the meaning of the colors, we present the resulting diagrams by minimizing $f_2$ with $\omega=0.01$ and varying $c_2\in \{ 0.2, 0.3,\dots, 0.8 \}$.}
\label{fig6}
\end{figure}

\begin{figure}[ht!]
\begin{subfigure}{0.33 \linewidth}
\centering
\includegraphics[scale=1.0]{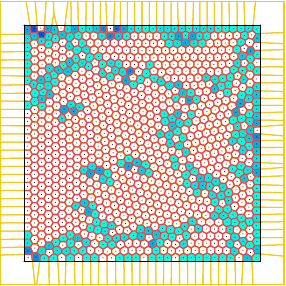}
\caption{$G(\A)$}
% \label{fig:e5a}
\end{subfigure}
\begin{subfigure}{0.33 \linewidth}
\centering
\includegraphics[scale=1.0]{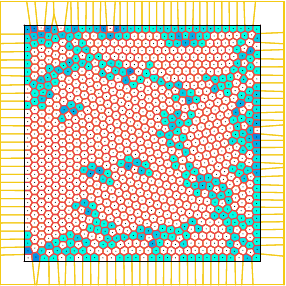}
\caption{$f_2(\A)$ with $c_2=0.3$} % $\omega=0.01$ 
% \label{fig:e5b}
\end{subfigure}
\begin{subfigure}{0.33 \linewidth}
\centering
\includegraphics[scale=1.0]{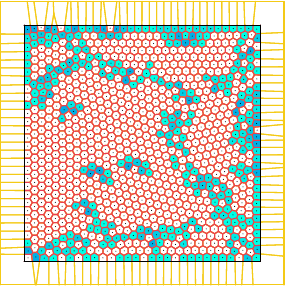}
\caption{$f_2(\A)$ with $c_2=0.4$}
% \label{fig:e5c}
\end{subfigure}
\begin{subfigure}{0.33 \linewidth}
\centering
\includegraphics[scale=1.0]{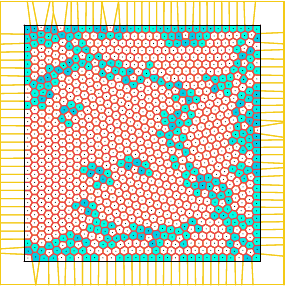}
\caption{$f_2(\A)$ with $c_2=0.5$} 
% \label{fig:e5d}
\end{subfigure}
\begin{subfigure}{0.33 \linewidth}
\centering
\includegraphics[scale=1.0]{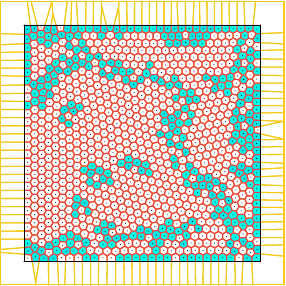}
\caption{$f_2(\A)$ with $c_2=0.6$}
% \label{fig:e5a}
\end{subfigure}
\begin{subfigure}{0.33 \linewidth}
\centering
\includegraphics[scale=1.0]{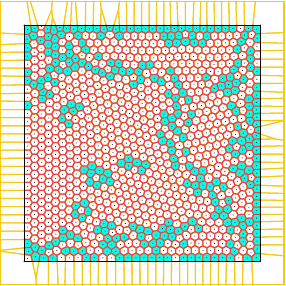}
\caption{$f_2(\A)$ with $c_2=0.7$} 
% \label{fig:e5b}
\end{subfigure}
\begin{subfigure}{0.33 \linewidth}
\centering
\includegraphics[scale=1.0]{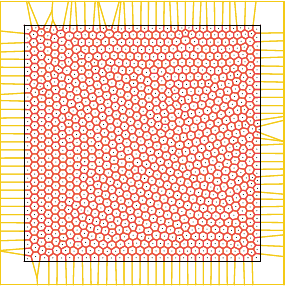}
\caption{$f_2(\A)$ with $c_2=0.8$}
% \label{fig:e5c}
\end{subfigure}
\captionsetup{list=no}
\caption{Centroidal Voronoi tessellations with $\kappa_0=1000$. In (a) we show the result of minimizing $G(\A)$. The darker the cell, the more unbalanced the sizes of its edges. In (b-g), preserving the meaning of the colors, we present the diagrams that result from minimizing  $f_2(\A)$ with $\omega=0.01$ and varying $c_2\in \{ 0.3, 0.4, \dots, 0.8 \}$.}
\label{fig7}
\end{figure}

Figure \ref{fig8} analyzes ten different diagrams with $\kappa_0=1000$ obtained by minimizing $G$ alone and the function $f_2$ with $\omega=0.01$ and $c_2 \in \{ 0.1, 0.2, \dots, 0.9\}$. For a given solution $\A^\star$, the figure shows the proportion of cells $V_i(\A^\star)$ satisfying $J^2(\A^\star)=0$ as a function of $c \in [0,1]$. Figure~\ref{fig8} shows, for example, that the statement ``all my edges are at least $10\%$ the average size of my edges'' is true for $100\%$ of the cells in any of the ten solutions and that the statement ``all my edges are at least $50\%$ the average size of my edges'' is true for slightly more than $95\%$ of the cells in the solutions computed by minimizing $G$ or minimizing $f_2$ with $c_2 \in \{0.1, 0.2, 0.3, 0.4 \}$. The zoom in the figure shows that when we minimize $f_2(\A)$ with $c_2 \leq 0.8$, the statement ``all my edges are at least $100\% \times c_2$ the average size of my edges'' is true for all the cells. When $c_2=0.9$, the geometric constraints are too restrictive and the solution does not satisfy the desired property. This corresponds to the fact that the corresponding curve falls below 1 for $c<0.9$. As a general observation, all the ten curves look very similar when $c$ varies from 0 to 0.35. This is because, in general, when a diagram is built by minimizing $G$ alone, there are only a few cells with small edges. The ``difference'' between the curves shows that these few undesired edges are eliminated when $f_2$ is minimized for increasing values of $c_2$. The curve relative to the diagram obtained by minimizing $G$ overlaps with that of minimizing $f_2$ with $c_2=0.1$. These curves are equal because, when $G$ is minimized, no cell has an edge whose size is less than 10\% of the mean.

\begin{figure}[ht!]
\begin{center}
\includegraphics[scale=0.5]{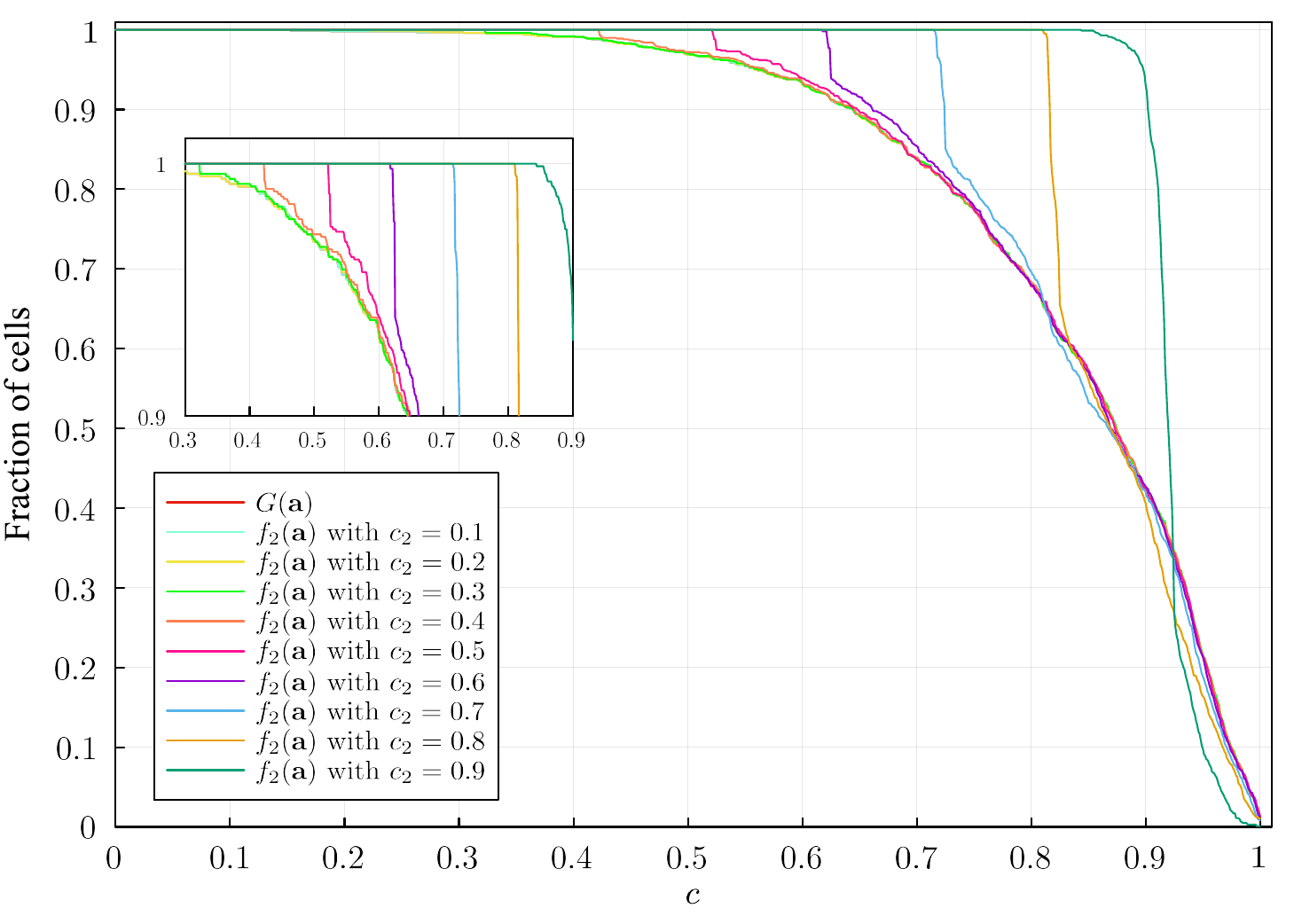}
\end{center}
\caption{This figure analyzes the solutions with $\kappa_0=1000$ found when minimizing $G$ alone and the function $f_2(\A)= \omega \, G(\A)+J^2(\A)$ with $\omega=0.01$, varying $c_2\in \{ 0.1, \dots, 0.9 \}$. For each solution, the figure shows, as a function of $c$, the proportion of cells that satisfy the statement ``all my edges are at least $100\%\times c$ the average size of my edges.''}
\label{fig8}
\end{figure}

% %%%%%%%%%%%%%%%%%%
% \subsection{Centroidal Voronoi tessellation seeking cells of varied sizes} \label{CVT S.C.V}
% %%%%%%%%%%%%%%%%%%

%%%%%%%%%%%%%%%%%%
\subsection{Density-based centroidal Voronoi tessellations} \label{CVT S.C.V}
%%%%%%%%%%%%%%%%%%

In this section, we deal with the construction of CVTs with cells whose size is determined by a function $\psi: A \to \R$. For this, following \cite{birgin:01}, we consider the merit function given by
\[
f_3(\A)=G(\A)+ \omega \, J^3(\A),
\]
where
\[
J^3(\A) := \frac{1}{\kappa_0} \sum^{\kappa_0}_{i=1}[J^3_i(\A)]^2 \text{ with } J_i^3(\A):=\left(\int_{V_i(\A)}\,dx\right)/\left(\frac{1}{\kappa_0}\int_A \, dx \right)-\psi(a_i),
\]
where $\omega \geq 0$ is given. The function $\psi$ has the role of dictating the desired ratio between the area of the cell $V_i(\A)$ and the mean area of the cells. The gradient $J^3(\A)$ depends on $\nabla J^3_i(\A)$ and $\nabla J^3_i(\A) = \nabla J^1_i(\A) - \nabla_{a_i}\psi(a_i)\cdot \delta a_i$.   A difficulty with the function $J^3$ thus defined is that the sum of the desired areas does not necessarily equal the total area of the region $A$. As a consequence, $J^3$ is expected not to vanish in its global minimizer, which makes it impractical to establish that the global minimizer has been reached. Another option would be to consider a merit function like the one defined in Section~\ref{CVT I.V}, but with an arbitrary non-constant density function~$\rho$. This was the approach considered in~\cite{du,liu:inria-00547936}. Its disadvantage compared to our function $J^3$ is that it relies on quadrature rules to approximate the merit function and its derivatives, while our approach still allows an exact evaluation of the integrals.

In the numerical experiments, we considered $\kappa_0=1000$ and arbitrarily defined
\begin{description}
\item[(a)] $\psi(z) = \psi_1(z) := a((\bar{z_2}-(\bar{z_1}/4)^2)^2 + (\bar{z_1}/4-1)^2) + b$, where $\bar{z} = (2z-c)/5$, $c$ is the center of the region $A$, $b=1/4$, and $a=19/16^2$; %$a = \frac{1}{16}(2-b-\frac{9}{16})$;
\item[(b)] $\psi(z) = \psi_2(z) := 0.1 + \frac{2.9}{\delta^2}\left(z_2-0.6\delta \sin(\frac{2\pi z_1}{\sqrt{\kappa_0}}) - \delta \right)^2$, where $\delta=\frac{\sqrt{\kappa_0}}{2}$;
\item[(c)] $\psi(z) = \psi_3(z) := 0.01 + 20\| z-c\|^2/r^2$, where $c$ and $r$ are the center and the radius of the circle inscribing the region $A$, respectively.
\end{description}
Table~\ref{tab9} and Figure~\ref{fig9} show the details of the optimization process and the solutions found for varying values of $\omega \in \{1, 0.1, 0.01\}$. The figures in the table show that, in some cases, the method stopped because the merit function gradient norm reached the desired value. In the other cases, the method was stopped due to ``lack of progress''. That is, the method continued as long as a decrease in the objective function was observed. If, in a successive number of iterations, progress is no longer observed, the method stops; see~\cite{nocedal:01} for details. This is not an issue in practice and this stopping criterion is as valid as any other, as the tolerance $\varepsilon_{\text{opt}}=10^{-8}$ used to stop by the gradient rule is arbitrary. In general, the figures in the table show that when $\omega=0.01$, the value of $G(\A^\star)$ is of the same order as the value we found when minimizing $G$ alone, i.e., 1.62E-01; see Table~\ref{tab2}. On the other hand, for $\omega=1$ we found values of $J^3(\A^\star)$ that are between 2 and 3 orders of magnitude smaller than those found with $\omega=0.01$, with no significant deterioration in the value of $G(\A^\star)$. Graphically, when $\omega$ is ``small'' cells tend to have more uniform areas, whereas for larger values of $\omega$ we observe cells with different areas. The case of $\psi_3$ is a little different from the other two, and what we just mentioned would be better observed considering smaller values of $\omega$.

\begin{table}[ht!]
\centering
\begin{tabular}{|c|cccccccc|}
\cline{2-9}
\multicolumn{1}{c|}{} & 
$\omega$ & $f_3(\A^\star)$ & $\| \nabla f_3(\A^\star) \|_{\infty}$ & $G(\A^\star)$ & $J^3(\A^\star)$ & it & fcnt & Time \\
\hline
\hline
\multirow{3}{*}{$\psi_1$}
& 1    & 3.00640E$-$01 & 5.1E$-$09 & 2.75839E$-$01 & 2.48011E$-$02 & 768 & 792 & 2.839\\
& 0.1  & 2.44043E$-$01 & 9.2E$-$09 & 2.14370E$-$01 & 2.96727E$-$01 & 478 & 490 & 1.140\\
& 0.01 & 1.84559E$-$01 & 9.8E$-$09 & 1.66529E$-$01 & 1.80295E$+$00 & 337 & 355 & 0.835\\
\hline
\hline
\multirow{3}{*}{$\psi_2$}
& 1    & 5.45833E$-$01 & 8.5E$-$04 & 3.17750E$-$01 & 2.28084E$-$01 & 51 & 375 & 1.395\\
& 0.1  & 2.75867E$-$01 & 1.6E$-$04 & 2.29055E$-$01 & 4.25556E$-$01 & 89 & 568 & 1.317\\ %\omega=0.11
& 0.01 & 1.88113E$-$01 & 1.8E$-$05 & 1.66878E$-$01 & 2.12344E$+$00 & 155 & 1281 & 2.845\\
\hline
\hline
\multirow{3}{*}{$\psi_3$}
& 1    & 2.07753E$+$00 & 7.2E$-$05 & 1.99442E$+$00 & 8.31023E$-$02 & 678 & 807 & 3.187\\
& 0.1  & 1.82859E$+$00 & 3.8E$-$03 & 1.59789E$+$00 & 1.20786E$+$00 & 1396 & 1521 & 3.638\\ %\omega = 0.191
& 0.01 & 8.37667E$-$01 & 9.4E$-$09 & 5.18745E$-$01 & 3.18922E$+$01 & 1990 & 2067 & 4.640\\
\hline
\end{tabular}
\caption{Details of the process of minimizing $f_3(\A) = G(\A) + \omega \, J^3(\A)$, considering the three different functions $\psi(z)$ and varying $\omega$.}
\label{tab9}
\end{table}

\begin{figure}[ht!]
\begin{subfigure}{0.33 \linewidth}
\centering
\includegraphics[scale=1.0]{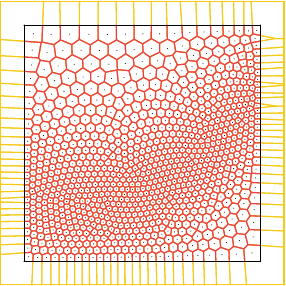}
\caption{$f_3(\A)$ with $\omega=1$}
\end{subfigure}
\begin{subfigure}{0.33 \linewidth}
\centering
\includegraphics[scale=1.0]{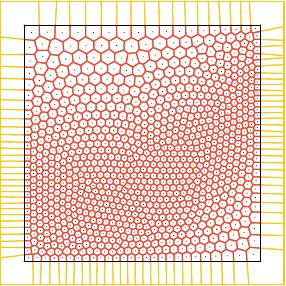}
\caption{$f_3(\A)$ with $\omega=0.1$}
\end{subfigure}
\begin{subfigure}{0.33 \linewidth}
\centering
\includegraphics[scale=1.0]{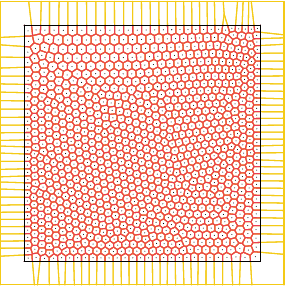}
\caption{$f_3(\A)$ with $\omega=0.01$}
\end{subfigure}
\begin{subfigure}{0.33 \linewidth}
\centering
\includegraphics[scale=1.0]{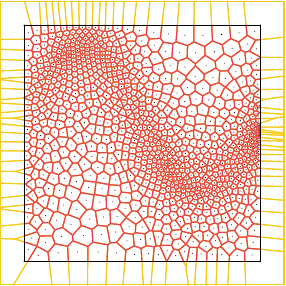}
\caption{$f_3(\A)$ with $\omega=1$}
\end{subfigure}
\begin{subfigure}{0.33 \linewidth}
\centering
\includegraphics[scale=1.0]{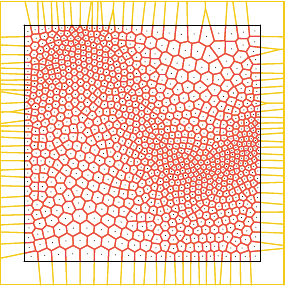}
\caption{$f_3(\A)$ with $\omega=0.1$}
\end{subfigure}
\begin{subfigure}{0.33 \linewidth}
\centering
\includegraphics[scale=1.0]{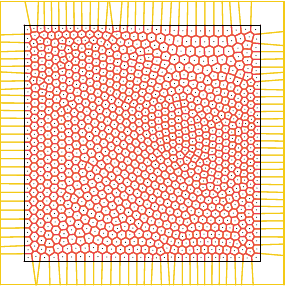}
\caption{$f_3(\A)$ with $\omega=0.01$}
\end{subfigure}
\begin{subfigure}{0.33 \linewidth}
\centering
\includegraphics[scale=1.0]{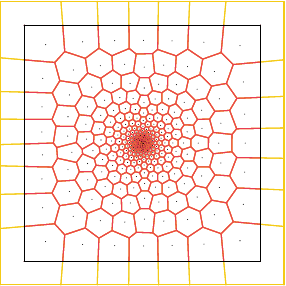}
\caption{$f_3(\A)$ with $\omega=1$}
\end{subfigure}
\begin{subfigure}{0.33 \linewidth}
\centering
\includegraphics[scale=1.0]{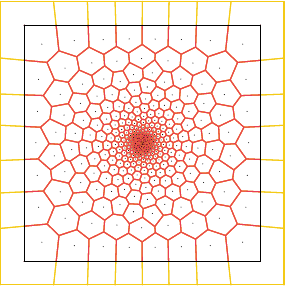}
\caption{$f_3(\A)$ with $\omega=0.1$}
\end{subfigure}
\begin{subfigure}{0.33 \linewidth}
\centering
\includegraphics[scale=1.0]{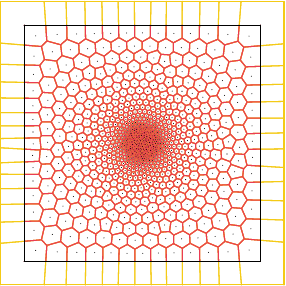}
\caption{$f_3(\A)$ with $\omega=0.01$}
\end{subfigure}
\caption{Centroidal Voronoi tessellation with $\kappa_0=1000$ constructed by seeking cells of different prescribed sizes, considering the three different functions $\psi(z)$. Pictures (a-c) correspond to $\psi_1$, (d-f) to $\psi_2$, and (g-i) to $\psi_3$.}
\label{fig9}
\end{figure}

\section{Conclusions}

In this work, we investigated the construction of centroidal Voronoi tesselations with geometric constraints. We have applied a specific case of the theory developed in \cite{birgin:01}, which provides a sensitivity analysis for Voronoi diagrams. An advantage of this approach is its unified treatment of interior and boundary edges and vertices. This analysis, which enables the computation of the derivative of any differentiable function depending on the Voronoi diagram, relies on standard nondegeneracy assumptions about the geometry.

The resulting optimization problems were easily solved with a standard optimization method, L-BFGS-B, because two arbitrary choices simplified them. The first choice was to consider a constant density function, which allowed the integral to be computed with high accuracy, without incorporating noise in the evaluation of the objective function and its derivatives. The second choice was to define a square domain $A$, which implied bound-constrained minimization problems. However, the construction of CVTs has a wide range of applications in which domains extend beyond simple box-shaped regions. The simplest case beyond rectangular domains is to consider a convex set. In this case, methods based on gradient projection would be an alternative. However, CVTs are not well defined when two or more sites coincide, and the projection operation tends to construct such points. In any case, with domains given by convex regions or more complex regions, optimization methods for general nonlinear programming (NLP) problems would be required. The problems considered in the present work, extended to arbitrary domains, represent an interesting benchmark set for existing NLP methods. In the same vein, the problems in the present work, with the inclusion of non-constant density functions, are a challenge for existing methods that deal with noise evaluations of the objective function and its gradient.

The numerical results indicate that it is possible to optimize the geometric features of CVTs while maintaining the centroidal property to a reasonable extent. In the numerical experiments, we have considered a single additional geometric constraint; if several simultaneous constraints are desired, a multi-objective optimization approach should be considered. A natural extension of this work is its application to large grids and surface grids \cite{DGJ}. Other research directions include exploring alternative mesh quality criteria to improve the convergence of finite-difference operators, for example, by minimizing the distance between the midpoint of a cell edge and the intersection points of grid segments with the cell edge, see \cite{HRK}. For three-dimensional problems, the theoretical framework established in \cite{birgin:01} must first be extended to three dimensions.\\

\noindent
\textbf{Ethics approval and consent to participate:} not applicable.\\

\noindent
\textbf{Consent for publication:} not applicable.\\

\noindent
\textbf{Funding:} This work has been partially supported by the Brazilian agencies FAPESP (grants 2013/07375-0, 2022/05803-3 and 2023/08706-1) and CNPq (grant 302073/2022-1).\\

\noindent
\textbf{Availability of data and materials:} The datasets generated and/or analyzed during the current study are available at \url{http://ime.usp.br/~egbirgin/}.\\

\noindent
\textbf{Competing interests:} The authors declare that they have no competing interests.\\

\noindent
\textbf{Authors' contributions:} All authors contributed equally to all phases of the development of this work.\\

\noindent
\textbf{Acknowledgements:} not applicable.

\bibliographystyle{plain}

\bibliography{bfl2024}

\end{document}